\numberwithin{equation}{section}
\newtheoremstyle{thmlemcorr}{15pt}{15pt}{\itshape}{15pt}{\itshape\bfseries}{.}{5pt}{{\thmname{#1}\thmnumber{ #2}\thmnote{ (#3)}}}
\newtheoremstyle{thmlemcorr*}{15pt}{15pt}{\itshape}{15pt}{\bfseries}{.}\newline{{\thmname{#1}\thmnumber{ #2}\thmnote{ (#3)}}}
\newtheoremstyle{defi}{15pt}{15pt}{}{15pt}{\itshape\bfseries}{.}{5pt}{{\thmname{#1}\thmnumber{ #2}\thmnote{ (#3)}}}
\newtheoremstyle{remexample}{15pt}{15pt}{}{15pt}{\itshape}{.}{5pt}{{\thmname{#1}\thmnumber{ #2}\thmnote{ (#3)}}}
\newtheoremstyle{ass}{10pt}{10pt}{}{}{\bfseries}{.}{10pt}{{\thmname{#1}\thmnumber{ A#2}\thmnote{ (#3)}}}
\theoremstyle{thmlemcorr}
\newtheorem{theorem}{Theorem}[section]
\newtheorem{lemma}[theorem]{Lemma}
\newtheorem{corollary}[theorem]{Corollary}
\theoremstyle{thmlemcorr*}
\newtheorem{theorem*}{Theorem}
\newtheorem{lemma*}[theorem]{Lemma}
\newtheorem{corollary*}[theorem]{Corollary}
\newtheorem{proposition*}[theorem]{Proposition}
\newtheorem{problem*}[theorem]{Problem}
\newtheorem{conjecture*}[theorem]{Conjecture}
\theoremstyle{defi}
\newtheorem{definition}[theorem]{Definition}
\theoremstyle{remexample}
\newtheorem{remark}[theorem]{Remark}
\theoremstyle{ass}
\newcommand{\Crm}{\mathrm{C}}
\newcommand{\Lrm}{\mathrm{L}}
\newcommand{\Wrm}{\mathrm{W}}
\newcommand{\Fcal}{\mathcal{F}}
\newcommand{\Lcal}{\mathcal{L}}
\newcommand{\Pcal}{\mathcal{P}}
\newcommand{\Rcal}{\mathcal{R}}
\renewcommand{\Bbb}{\mathbb{B}}
\newcommand{\Mbb}{\mathbb{M}}
\newcommand{\Nbb}{\mathbb{N}}
\DeclareMathOperator{\esssup}{ess\,inf}
\DeclareMathOperator{\im}{Im}
\DeclareMathOperator{\curl}{curl}
\DeclareMathOperator{\dist}{dist}
\newcommand{\area}[1]{\langle\, #1 \, \rangle}
\newcommand{\barea}{\area{\sbullet}}
\newcommand{\set}[2]{\left\{\, #1 \ \ \textup{\textbf{:}}\ \ #2 \,\right\}}
\newcommand{\setb}[2]{\bigl\{\, #1 \ \ \textup{\textbf{:}}\ \ #2 \,\bigr\}}
\newcommand{\setB}[2]{\Bigl\{\, #1 \ \ \textup{\textbf{:}}\ \ #2 \,\Bigr\}}
\newcommand{\dpr}[1]{\langle #1 \rangle}
\newcommand{\dprB}[1]{\Bigl\langle #1 \Bigr\rangle}
\newcommand{\cl}[1]{\overline{#1}}
\newcommand{\dd}{\;\mathrm{d}}
\newcommand{\R}{\mathbb{R}}
\newcommand{\loc}{\mathrm{loc}}
\newcommand{\toweakstar}{\overset{*}\rightharpoonup}
\newcommand{\toup}{\uparrow}
\newcommand{\BigO}{\mathrm{\textup{O}}}
\newcommand{\SmallO}{\mathrm{\textup{o}}}
\newcommand{\sbullet}{\begin{picture}(1,1)(-0.5,-2)\circle*{2}\end{picture}}
\newcommand{\frarg}{\,\sbullet\,}
\newcommand{\BV}{\mathrm{BV}}
\newcommand{\BD}{\mathrm{BD}}
\newcommand{\LD}{\mathrm{LD}}
\newcommand{\eps}{\epsilon}
\newcommand{\proofstep}[1]{\textit{#1}}
\def\XXint#1#2#3{{\setbox0=\hbox{$#1{#2#3}{\int}$} 
\vcenter{\hbox{$#2#3$}}\kern-.95\wd0}}
\newcommand{\restrict}{\begin{picture}(10,8)\put(2,0){\line(0,1){7}}\put(1.8,0){\line(1,0){7}}\end{picture}}
\renewcommand{\eps}{\varepsilon}
\renewcommand{\phi}{\varphi}
\newcommand{\M}{\mathcal M}
\newcommand{\wc}{\rightharpoonup}
\DeclareMathOperator{\E}{\bf E}
\DeclareMathOperator{\range}{Im}
\DeclareMathOperator{\Ker}{ker}
\DeclareMathOperator{\A}{\mathcal A}
\DeclareMathOperator{\Div}{div}
\author[A.~Arroyo-Rabasa]{Adolfo Arroyo-Rabasa}
\address{\textit{Adolfo Arroyo-Rabasa:} Institut f\"ur Angewandte Mathematik, Universit\"at Bonn, 53115 Bonn, Germany.}
\email{adolfo.arroyo.rabasa@hcm.uni-bonn.de}
\title[Relaxation and optimization for integral functionals under PDE constraints]{Relaxation and optimization for linear-growth convex integral functionals under PDE constraints} 
\begin{document}

\begin{abstract}We give necessary and sufficient conditions for minimality of generalized  minimizers for linear-growth functionals of the form 
		\[
		\mathcal F[u] = \int_\Omega f(x,u(x)) \dd x, \qquad u:\Omega \subset \R^d \to \R^N
		\]
		where $u$ is an integrable function satisfying a general PDE constraint. Our analysis is based on two ideas: a relaxation argument into a subspace of the space of bounded vector-valued Radon measures $\M(\Omega;\R^N)$, and the introduction of a set-valued pairing in $\M(\Omega;\R^N) \times \Lrm^\infty(\Omega;\R^N)$. By these means we are able to  show an intrinsic relation between minimizers of the relaxed problem and maximizers of its dual formulation also known as the saddle-point conditions. In particular, our results can be applied to relaxation and minimization problems in $\BV$, $\BD$.

\vspace{4pt}

\noindent\textsc{MSC (2010):} 49J20 (primary); 49J45, 49K35, 49N15  (secondary).

\vspace{4pt}

\noindent\textsc{Keywords:} relaxation, optimality conditions, functional on measures, convex integrand, linear growth.
\vspace{4pt}

\noindent\textsc{Date:} \today{}.
\end{abstract}

\maketitle

\section{introduction}

Let $\Omega$ be an open subset of $\R^d$ with $\Lcal^d(\partial \Omega) = 0$. The aim of this work is to establish sufficient and necessary conditions, in the sense of convex duality, for a vector-valued Radon measure $\mu$ to be a  \emph{generalized minimizer} of an integral functional of the form
\begin{equation*}\label{eq:principal}
\mathcal F[u] := \int_{\Omega} f(x, u(x)) \, \text{d}x, 
\end{equation*}
defined on functions $u:\Omega \to \R^N$ 
satisfying the PDE constraint 
\[
\A u = \tau \quad \text{in $\Omega$}, \qquad \text{in the sense of distributions},
\]
where $\A$ is a linear partial differential operator of arbitrary order for which we impose very general conditions (see (H1)-(H2) below)

As part of our main assumptions, $f : \Omega \times \R^N \to [0,\infty)$ is a continuous and convex integrand, that is, $f(x,\frarg)$ is convex for every $x \in \Omega$. We  further assume that $f$ satisfies the following standard coercivity and linear growth assumptions: there exists a positive constant $M$ such that
\begin{equation}\label{eq:linear}
\frac{1}{M}(|z| - 1) \le |f(x,z)|\le M(1 + |z|), \quad \text{for all $(x,z) \in \Omega \times \R^{N}$}.
\end{equation}

We shall consider the linear partial differential operator $\A$ as a linear (possibly unbounded) operator $\A: \Wrm^{\A,1}(\Omega) \subset \Lrm^1(\Omega;\R^N)  \to \Lrm^1(\Omega;\R^n)$, where 
\[
\Wrm^{\A,p}(\Omega) \coloneqq \setB{u \in \Lrm^p(\Omega;\R^N)}{\A u \in \Lrm^p(\Omega;\R^n)}, \quad 1 \le p \le \infty,
\]
is the $p$-integrable $\A$-Sobolev space of $\Omega$.
In this sense, we require that the candidate linear partial differential operator $\A$ satisfies the following assumptions: 
\begin{itemize}
	\item[(H1)] The space
	$\im \A \coloneqq \setb{\A u}{u \in \Wrm^{\A,1}(\Omega)}$ is a closed subspace of $\Lrm^1(\Omega;\R^n)$ with respect to the $\Lrm^1$-strong topology,
	\item[(H2)] for all vector-valued Radon measures $\mu \in \M(\Omega;\R^N)$ such that 
	\[
	\text{$\A \mu = 0$ in $\Omega$, \quad in the sense of distributions},
	\]
	there exists a sequence $(u_j) \subset \Lrm^1(\Omega;\R^N) \cap \ker \A$ such that
	\[
	\text{$u_j \to \mu$ area-strictly in $\M(\Omega;\R^N)$}.
	\]
\end{itemize}
Here, we have denoted by $\M(\Omega;\R^N) \cong (\Crm_b(\Omega;\R^N))^*$ the space of bounded $\R^N$-valued Radon measures in $\Omega$, where we say that a sequence of vector Radon measures $\mu_j$ area-strictly converges to a measure $\mu$ if and only if 
\[
\text{$\mu_j \toweakstar \mu$ in $\M(\Omega;\R^N)$ \quad and \quad $\area{\mu_j}(\Omega) \to \area{\mu}(\Omega)$}, 
\]
for $\barea$ the (generalized) area functional defined in \eqref{eq:areaf}.\\

This general setting contemplates various applications and considerations:
\begin{itemize}
\item[(i)]\emph{On the integrands.} Our results remain the same up to adding a linear term to $f$, i.e., considering integrands of the form
\[
g(x,z) \coloneqq f(x,z) + q^*(x) \cdot z, \quad q^* \in \Lrm^\infty(\Omega;\R^N).
\] 
	\item[(ii)]\emph{On Assumption \rm(H1).} This assumption holds for any operator $\A$ where a  Poincar\'e type inequality holds, for example, if for every $u \in \Wrm^{\A,1}$ it holds that
	\[
	\|u\|_{\Lrm^{1^*}} \le c_\Omega \|\A w\|_{\Lrm^1}, \quad \text{for some $1^* > 1$}.
	\]
	In particular, this includes the relaxation and minimization in $\BV(\Omega;\R^N)$ of problems of the form
	\[
	\int_\Omega f(x,\nabla u(x)) \dd x, \quad u \in \Wrm^{1,1}(\Omega);
	\]
	by taking $\A = \curl$. Similarly, by a Poincar\'e-Korn inequality (see e.g. \cite{Kohn82(KP),Temam83(KP)}),  one can deal with the relaxation and optimization
	in $\BD(\Omega)$ of  problems of the form
	\[
	\int_\Omega f(x,E u(x)) \dd x, \quad u \in \LD(\Omega),
	\]
	where $E u = (D u + (Du)^T)/2$ is the symmetric part of the distributional derivative of $u$,
	and
	\[
	\LD(\Omega) \coloneqq \setB{u \in \Lrm^1(\Omega;\R^d)}{Eu \in \Lrm^1(\Omega;\Mbb^{d \times d}_{\rm{sym}})};
	\]
	 by setting $\A = \curl \curl$ in $\Lrm^1(\Omega;\Mbb^{d \times d}_{\rm{sym}})$. See \cite{BeckSchmidt15} where a generalized pairing in $\BV(\Omega;\R^N) \times \mathrm H_{\Div}(\Omega;\R^N)$ from \cite{Anzellottti83} is used to derive the correspondent saddle-point conditions; 
 see also \cite{KohnTemam82,KohnTemam83} where 
	saddle-point conditions in $\BD$ are established for Hencky plasticity models.
	\item[(iii)]\emph{On Assumption \rm{(H2)}.} If $\A$ is an homogeneous partial differential operator and 
	$\Omega$ is a {\it strictly star-shaped} domain, i.e., there exists $x_0 \in \Omega$ such that 
	\[
	\overline{(\Omega - x_0)} \subset t(\Omega - x_0), \qquad \forall \; t>1;
	\]
	we refer the reader to \cite{Muller87} where such 
	a geometrical assumption is made to address a homogenization problem in the case $\A = \curl$.    
	\item[(iv)] One can also define the problem for more particular domains $D(A)$ than $\Wrm^{\A,1}(\Omega)$ as
	long as $D(A)$ is dense in $\Lrm^1(\Omega;\R^N)$ and (H1)-(H2) hold. In this way one may consider $\A$-Sobolev spaces with zero boundary other characterizations. 
\end{itemize}

\subsection{Main results} 
Our results concern the case where  
\[
\tau = \A u_0\in \Lrm^1(\Omega;\R^n) \cap \im \A,
\] 
for some $u_0 \in \Wrm^{\A,1}(\Omega)$.

Consider the $z$-variable Fenchel  conjugate
$f^* : \Omega \times \R^N \to \cl\R$ of $f$, which is given by the formula\footnote{
	For the sake of simplicity, we depart from the standard notation $(f(x,\frarg))^*$ for the $z$-variable Fenchel transform}
\[f^*(x,z^*) \coloneqq 
\sup_{z \in \R^N}\big\{z^*\cdot z  -  f(x,z)\big\},\qquad z^* \in \R^N.\]

One way to derive optimality conditions for our constrained problem is to consider the dual problems 
\begin{align}\label{eq:problem}
& \inf_{\substack{u \in \Lrm^1(\Omega;\R^N) \\ \A u = \tau \in \im \A}}  \left\{\int_{\Omega} f(x,u(x))\dd x \right\}, && \tag{$\mathcal P$} \\
\label{eq:dualproblem}
& \sup_{\substack{w^* \in \Lrm^\infty(\Omega;\R^n)\\ w^* \in D(\A^*)}}   \left\{\mathcal R[w^*] \coloneqq \dpr{w^*,\tau} -\int_{\Omega} f^*(x,\A^* w^*(x)) \dd x\right\},&& \tag{$\mathcal P^*$} 
\end{align}
where $\A^*$ is the dual operator of $\A$ in the sense of linear (unbounded) operators and where
\begin{align*}
D(\A^*) \coloneqq \setB{& w^* \in \Lrm^\infty(\Omega;\R^n)}{\text{$\exists \; c > 0$ such that} \\ 
 & \text{$|\dpr{w^*,\A u}| \le c\|u\|_{\Lrm^1}$ for all $u \in \Wrm^{\A,1}(\Omega)$}},
\end{align*}
denotes its correspondent domain.
 Using the duality of $\A$ and $\A^*$ 
 it is elementary to check that
	\[
	\mathcal F[u + u_0] \ge \mathcal R[w^*], \qquad \text{for every $u \in \ker \A$ and $w^* \in D(\A^*)$},
	\]
	An immediate observation is that the infimum in \eqref{eq:problem} is greater or equal than the supremum in \eqref{eq:dualproblem}. 
	Convex duality is particularly useful when these two extremal quantities agree since it leads to a saddle-point condition between minimizers of the primal problem and maximizers of the dual problem (we refer the reader to \cite{EkelandTemamBook} for an extensive introduction on this topic). 
	
	Our first result relies on Ekeland's Variational Principle and asserts that there is in fact no gap between these two quantities:
	
	\begin{theorem}\label{prop:nogap} The problems \eqref{eq:problem} and \eqref{eq:dualproblem} are dual of each other and the infimum in problem \eqref{eq:problem} agrees with the supremum in problem \eqref{eq:dualproblem}, i.e., 
		\[
		\inf_{\A u = \tau} \mathcal F[u] = \sup_{w^* \in D(\A^*)} \mathcal R[w^*].
		\]
	Moreover, the supremum in the right hand side  is in fact a maximum, which is equivalent to problem \eqref{eq:dualproblem} having at least one solution.
	\end{theorem}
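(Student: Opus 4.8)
The plan is to recognize \eqref{eq:problem}--\eqref{eq:dualproblem} as a Fenchel--Rockafellar pair and to compensate for the failure of the usual constraint qualification by means of Ekeland's variational principle. The growth bound \eqref{eq:linear} makes $\mathcal F$ finite, convex and (strongly) continuous on $\Lrm^1(\Omega;\R^N)$; identifying \eqref{eq:problem} as the minimization of $\mathcal F$ over the closed affine set $\setn{u}{\A u = \tau}$ and \eqref{eq:dualproblem} as the dual obtained by dualizing the constraint through the singleton indicator $G = \chi_{\{\tau\}}$, the inequality recorded just before the statement is precisely weak duality. Since $G$ is nowhere continuous, the standard Fenchel--Rockafellar qualification fails, and this is exactly the gap that Ekeland's principle will fill.

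First I would fix a minimizing sequence $u_j$ with $\mathcal F[u_j] \to m$, where $m$ denotes the (finite, by admissibility of $u_0$) infimum in \eqref{eq:problem}, and apply Ekeland's variational principle to $\mathcal F$ on the complete metric space $\big(\setn{u}{\A u = \tau},\, \|\frarg\|_{\Lrm^1}\big)$, which is nonempty and closed by (H1). This yields $v_j$ with $\mathcal F[v_j] \le \mathcal F[u_j]$ and minimizing $u \mapsto \mathcal F[u] + \eps_j\|u - v_j\|_{\Lrm^1}$ over the constraint set, with $\eps_j \to 0$. Testing against competitors $v_j + h$ with $h \in \ker \A$, this says that $0$ minimizes $h \mapsto \mathcal F[v_j + h] + \eps_j\|h\|_{\Lrm^1}$ on the subspace $\ker \A$.

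Next I would extract the approximate Euler--Lagrange condition. As $\mathcal F$ is continuous the subdifferential sum rule applies, so subspace-optimality forces $\big(\partial \mathcal F[v_j] + \eps_j \overline{B}_{\Lrm^\infty}\big) \cap (\ker \A)^\perp \ne \emptyset$. Hence there exist $v_j^* \in \Lrm^\infty$ with $v_j^*(x) \in \partial_z f(x, v_j(x))$ a.e.\ (whence $\|v_j^*\|_{\Lrm^\infty}\le M$ by \eqref{eq:linear}) and $b_j$ with $\|b_j\|_{\Lrm^\infty}\le \eps_j$ such that $v_j^* + b_j \in (\ker \A)^\perp$. The closed-range assumption (H1), through the closed range theorem, identifies $(\ker \A)^\perp$ with $\ran \A^*$, so there is $w_j^* \in D(\A^*)$ with $\A^* w_j^* = v_j^* + b_j$. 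Combining $\dpr{w_j^*,\tau} = \dpr{w_j^*,\A v_j} = \dpr{\A^* w_j^*, v_j}$ with the Fenchel equality $f(x,v_j) + f^*(x,v_j^*) = v_j^* \cdot v_j$, a short computation gives
\[
\mathcal R[w_j^*] = \mathcal F[v_j] + \int_\Omega b_j \cdot v_j \dd x - \int_\Omega \big(f^*(x, v_j^* + b_j) - f^*(x, v_j^*)\big)\dd x .
\]
The first error term is bounded by $\eps_j\|v_j\|_{\Lrm^1}$, and $\|v_j\|_{\Lrm^1}$ is controlled by the coercivity in \eqref{eq:linear}; hence, once the last integral is shown to be negligible (the delicate point addressed below), $\mathcal R[w_j^*] \to m$, which combined with weak duality proves that the infimum in \eqref{eq:problem} equals the supremum in \eqref{eq:dualproblem}.

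For attainment I would select $w_j^*$ of minimal $\Lrm^\infty$-norm, uniformly bounded thanks to the open-mapping estimate carried by (H1), pass to a weak-$*$ limit $w_j^* \toweakstar w^* \in D(\A^*)$ using the separability of $\Lrm^1(\Omega;\R^N)$, and let $j\to\infty$: the linear term is weak-$*$ continuous while convexity and normality of $f^*$ render $w^* \mapsto \int_\Omega f^*(x,\A^* w^*)\dd x$ weak-$*$ lower semicontinuous, so $\mathcal R[w^*] \ge \limsup_j \mathcal R[w_j^*] = m$ and $w^*$ is a maximizer. The main obstacle is this final limit passage in the non-reflexive space $\Lrm^\infty$: one must produce a genuine bound on $w_j^*$ itself rather than on $\A^* w_j^*$ (again relying on (H1)), and one must control the increment $f^*(x,v_j^*+b_j)-f^*(x,v_j^*)$, which is delicate when $v_j^*$ lies on the boundary of $\dom f^*(x,\frarg)\subseteq \overline{B}_M$ — a difficulty one may circumvent by first regularizing $f$ to have superlinear growth and removing the regularization at the end.
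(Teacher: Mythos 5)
Your overall strategy coincides with the paper's: weak duality from the Fenchel/perturbation setup, Ekeland's variational principle on the constraint set to produce approximate critical points with subgradients $v_j^*\in\partial_z f(x,v_j(x))$ that are almost orthogonal to $\ker\A$, the closed range theorem via (H1) to identify $(\ker\A)^\perp$ with $\im\A^*$, and weak* compactness in $\Lrm^\infty$ plus weak* upper semicontinuity of $-I_{f^*}$ for attainment. However, there is a genuine gap at exactly the point you flag. You correct $v_j^*$ by $b_j$ with $\|b_j\|_{\Lrm^\infty}\le\eps_j$ so that $v_j^*+b_j\in\im\A^*$, and then need $\int_\Omega\bigl(f^*(x,v_j^*+b_j)-f^*(x,v_j^*)\bigr)\dd x$ to be negligible. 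But $\dom f^*(x,\frarg)\subseteq M\cdot\cl{\Bbb^N}$ and the exact subgradients $v_j^*$ typically sit on the boundary of this domain (already for $f(x,z)=|z|$, where $f^*$ is the indicator of the unit ball and $\partial_z f(z)=z/|z|$), so $f^*(x,v_j^*+b_j)=+\infty$ on a set of positive measure and $\mathcal R[w_j^*]=-\infty$: the argument collapses in the most basic example. Your proposed remedy, regularizing $f$ to superlinear growth, does not obviously repair this: any superlinear perturbation of $f$ is generically $+\infty$ along $\Lrm^1$-minimizing sequences (so $\inf\mathcal F_\delta$ need not approximate $\inf\mathcal F$ without an additional density assumption), and since $f_\delta\ge f$ forces $(f_\delta)^*\le f^*$, the regularized dual suprema are driven the wrong way for the comparison you need.

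The paper resolves this differently, and the fix is worth internalizing: it never perturbs the subgradients. It keeps the exact $v_\eps\in\partial_z f^\delta(\frarg,u_\eps)$ (for which the Fenchel equality holds pointwise and $f^{\delta*}(x,v_\eps(x))$ is finite), records only the quantitative defect $|\dpr{v_\eps,\eta}|\le\sqrt{\eps}\,\|\eta\|_{\Lrm^1}$ for $\eta\in\ker\A$, extracts a weak* limit $v_\eps\toweakstar v$, and observes that the defect vanishes in the limit so that $v\in(\ker\A)^\perp=\im\A^*$; the chain of inequalities then uses weak* upper semicontinuity of $\eta\mapsto-\int f^{\delta*}(x,\eta)\dd x$ rather than any finite-$j$ identification with an element of $\im\A^*$. (The paper's mollification of $f$ in $z$ serves a different purpose than yours: it makes $\partial_z f$ single-valued so that Ekeland plus Fatou yields a genuine pointwise subgradient, while preserving the linear growth constant; your appeal to Rockafellar's description of $\partial I_f$ and measurable selection would let you skip that step. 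Your open-mapping bound on $w_j^*$ itself for the attainment step is also unnecessary: $\mathcal R$ factors through $\A^*w^*$, so the uniform bound $\|\A^*w_j^*\|_{\Lrm^\infty}\le M$ from $\dom f^*\subseteq M\cdot\cl{\Bbb^N}$ together with weak* closedness of $\im\A^*$ suffices, as in the paper.)
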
 
	
If a \emph{classical} minimizer $u \in \Lrm^1(\Omega;\R^N)$ of~\eqref{eq:problem} exists and  $w^*$ is a solution of~\eqref{eq:dualproblem}, then the pairing $\langle \A^*  w^*,  u\rangle$ is a saddle-point of these two variational problems. This constitutive relation between $ u$ and $ w^*$ can be derived by standard methods and is expressed by the following pointwise characterization:
	\begin{equation*}\label{eq:calssic}
	f(x,  u(x)) + f^*(x,\A ^*  w^*(x))  =   u(x)  \cdot \A^* w^*(x), \quad \text{for $\Lcal^d$-a.e. $x \in \Omega$}.
	\end{equation*} 

	Under the established assumptions, the infimum of problem~\eqref{eq:problem} is finite and minimizing sequences are $\Lrm^1$-uniformly bounded. 
	It is also well-known (see \cite{Rockafellar,BerLas73,EkelandTemamBook,BouchitteValadier88}) that the convexity of $f(x,\frarg)$ is a sufficient condition to ensure the $\Lrm^1$-weak sequential lower semicontinuity of $\Fcal$, i.e., 
	\[
	\liminf_{j \to \infty} \mathcal F[u_j] \ge \mathcal F[u], \quad \text{whenever $u_j \wc u$ in $\Lrm^1(\Omega;\R^N)$}.
	\]
	However,  
	due to the lack of weak-compactness of $\Lrm^1$-bounded sets, we may only expect that
	\[
	u_j \, \mathcal L^d \toweakstar \mu \in \M(\Omega;\R^N).
	\]
	One must therefore work with the so-called relaxation of $\mathcal F$:
	\begin{theorem}[Relaxation]\label{prop:relax} 
	Let $f : \Omega \times \R^N \to [0,\infty)$ be a continuous integrand with linear growth at infinity as in \eqref{eq:linear}, and such that $f(x,\frarg)$ is convex for all $x \in \Omega$. Further assume that there exists a modulus of continuity $\omega$ such that
	\begin{equation}\label{eq:modulus}
	|f(x,z) - f(y,z)| \le \omega(|x - y|)(1 + |z|) \quad \text{for all $x,y \in \Omega$, $z \in \R^N$}.
	\end{equation}
Then the relaxation, 
\[
\overline{\mathcal F}[\mu] := \setB{\liminf_{j \to \infty} \Fcal[u_j]}{u_j \in u_0 + \ker \A \quad \text{and} \quad u_j \Lcal^d \toweakstar \mu},
\]
of the functional 
	\[
	\Fcal[u] \coloneqq \int_{\Omega} f(x,u(x)) \, \textnormal{d}x, \quad u \in u_0 + \ker \A,
	\]
	is given by the functional
	\begin{align*}
	\mu \mapsto \int_{\Omega} f\left(x, \frac{\dd \mu}{\dd \Lcal^d}(x)\right) \, \textnormal{d}x +  \int_{\Omega} f^\infty\left(x,\frac{\dd \mu^s}{\dd |\mu^s|}(x)\right) \, \textnormal{d}|\mu^s|(x),
	\end{align*}
	defined for measures in the affine space $u_0 + \ker_\M \A$, where
	\[
	 \ker_\M \A:= \setB{\mu \in \M(\Omega;\R^N)}{\text{$\A \mu = 0$ in the sense of distributions}}.
	\]
	Here, $\mu = \frac{\dd \mu}{\dd \Lcal^d} \Lcal^d + \mu^s$ is the Radon-Nikod\'ym decomposition of $\mu$ with respect to $\Lcal^d$ and
	\[
	f^\infty(x,z) \coloneqq \lim_{\substack{x' \to x\\t \to \infty}} \frac{f(x',tz)}{t} \quad (x,z) \in \Omega \times \R^N.
	\]  
	is the {recession function} of $f$.
\end{theorem}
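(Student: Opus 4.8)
The plan is to establish the two inequalities that characterize a relaxation: the \emph{lower bound} $\overline{\Fcal}[\mu]\ge\Gcal[\mu]$, valid along every weakly-$*$ convergent competing sequence, and the \emph{upper bound} $\overline{\Fcal}[\mu]\le\Gcal[\mu]$, obtained by exhibiting an optimal sequence; here $\Gcal$ denotes the candidate functional in the statement. That the domain of $\overline{\Fcal}$ is contained in $u_0+\ker_\M\A$ comes for free: if $u_j\in u_0+\ker\A$ and $u_j\Lcal^d\toweakstar\mu$, then testing the distributional identity $\A u_j=\A u_0$ against smooth compactly supported fields and passing to the limit gives $\A\mu=\A u_0$, i.e. $\mu\in u_0+\ker_\M\A$. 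The unifying device for both inequalities is the \emph{perspective integrand}
\[
\widetilde f(x,(z,t)):=t\,f\!\left(x,\frac{z}{t}\right),\ t>0,\qquad \widetilde f(x,(z,0)):=f^\infty(x,z),
\]
which is continuous, convex and positively $1$-homogeneous in $(z,t)\in\R^N\times\R$ by the convexity and linear growth \eqref{eq:linear} of $f$, together with the \emph{graph measures} $\lambda_u:=(u\,\Lcal^d,\Lcal^d)\in\M(\Omega;\R^N\times\R)$. Indeed, writing $\lambda_\mu:=(\mu,\Lcal^d)$, positive $1$-homogeneity gives
\[
\int_\Omega\widetilde f\!\left(x,\frac{\dd\lambda_\mu}{\dd|\lambda_\mu|}\right)\dd|\lambda_\mu|
=\int_\Omega f\!\left(x,\frac{\dd\mu}{\dd\Lcal^d}\right)\dd x
+\int_\Omega f^\infty\!\left(x,\frac{\dd\mu^s}{\dd|\mu^s|}\right)\dd|\mu^s|
=\Gcal[\mu],
\]
so that the explicit functional is a Reshetnyak-type functional evaluated at the graph measure.

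For the lower bound I would argue as follows. Given any admissible sequence $u_j\in u_0+\ker\A$ with $u_j\Lcal^d\toweakstar\mu$, the graph measures converge weakly-$*$, $\lambda_{u_j}\toweakstar\lambda_\mu$ in $\M(\Omega;\R^N\times\R)$, because the second component is the fixed measure $\Lcal^d$. Since $\widetilde f$ is lower semicontinuous, convex and positively $1$-homogeneous in its second argument, Reshetnyak's lower semicontinuity theorem applies and yields
\[
\liminf_{j\to\infty}\Fcal[u_j]
=\liminf_{j\to\infty}\int_\Omega\widetilde f\!\left(x,\frac{\dd\lambda_{u_j}}{\dd|\lambda_{u_j}|}\right)\dd|\lambda_{u_j}|
\ge\int_\Omega\widetilde f\!\left(x,\frac{\dd\lambda_\mu}{\dd|\lambda_\mu|}\right)\dd|\lambda_\mu|
=\Gcal[\mu].
\]
Taking the infimum over all such sequences gives $\overline{\Fcal}[\mu]\ge\Gcal[\mu]$. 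This half uses only convexity and continuity of $f$ and weak-$*$ convergence; neither (H2) nor the modulus of continuity \eqref{eq:modulus} enters here.

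For the upper bound I would build a recovery sequence using Assumption (H2). Write $\mu=u_0\Lcal^d+\sigma$ with $\sigma\in\ker_\M\A$. By (H2) there is $(w_j)\subset\Lrm^1(\Omega;\R^N)\cap\ker\A$ with $w_j\to\sigma$ area-strictly, and I set $u_j:=u_0+w_j\in u_0+\ker\A$. The key step is to promote this to the \emph{area-strict} convergence $u_j\Lcal^d\to\mu$: since adding the fixed absolutely continuous measure $u_0\Lcal^d$ preserves area-strict convergence (equivalently, the strict convergence of graph measures $\lambda_{w_j}\to\lambda_\sigma$ is stable under the $\Lrm^1$-translation by $(u_0,0)$), one obtains $u_j\Lcal^d\toweakstar\mu$ and $\area{u_j\Lcal^d}(\Omega)\to\area{\mu}(\Omega)$. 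Because \eqref{eq:modulus} guarantees that $f^\infty$ is well defined and jointly continuous, $f$ falls within the scope of Reshetnyak's \emph{continuity} theorem for area-strict convergence; applying it (with $u_j\Lcal^d$ carrying no singular part) gives
\[
\Fcal[u_j]=\int_\Omega f(x,u_j)\dd x
\ \tolong\ \int_\Omega f\!\left(x,\frac{\dd\mu}{\dd\Lcal^d}\right)\dd x
+\int_\Omega f^\infty\!\left(x,\frac{\dd\mu^s}{\dd|\mu^s|}\right)\dd|\mu^s|
=\Gcal[\mu]\qquad(j\to\infty),
\]
whence $\overline{\Fcal}[\mu]\le\Gcal[\mu]$. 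Together with the lower bound this proves the theorem, while finiteness of $\Gcal[\mu]$ (so that $u_0+\ker_\M\A$ is exactly the domain) follows from the upper growth bound in \eqref{eq:linear}.

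The main obstacle I anticipate is precisely this upper-bound step, i.e. the passage from Assumption (H2) to a genuine recovery sequence. Two points need care: first, verifying that area-strict convergence survives the addition of the fixed $\Lrm^1$-datum $u_0$, which is what converts the approximation property (H2) — stated only for measures in $\ker_\M\A$ — into area-strict convergence of the constrained competitors $u_j\Lcal^d$ toward $\mu$; and second, checking that the modulus-of-continuity hypothesis \eqref{eq:modulus} upgrades the pointwise (monotone) existence of $f^\infty$ to the joint continuity of $(x,z)\mapsto f^\infty(x,z)$ demanded by Reshetnyak's continuity theorem. The lower bound, by contrast, is essentially automatic once the problem is recast through the perspective integrand $\widetilde f$ and the graph measures.
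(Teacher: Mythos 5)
Your proposal is correct and follows essentially the same route as the paper: the lower bound is the generalized Reshetnyak lower semicontinuity result for convex integrands in $\E(\Omega;\R^N)$ (which the paper cites as Theorem \ref{thm:alibertlsc}, and which you re-derive via the perspective integrand and graph measures), and the upper bound combines Assumption (H2) with the Reshetnyak-type continuity theorem under area-strict convergence (Theorem \ref{thm:resch}), with the continuity of $f^\infty$ supplied by the modulus of continuity as in Lemma \ref{rem:rec1}. The two points you flag as delicate — stability of area-strict convergence under translation by the fixed $\Lrm^1$ datum $u_0$, and joint continuity of $f^\infty$ — are exactly the points the paper's proof also relies on (the first only implicitly), so no genuinely different ingredient is involved.
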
 
Extending the differential constraint to $\M(\Omega;\R^N)$, the relaxed functional $\overline{ \mathcal F}$ gives rise to the relaxed problem
\begin{align}\label{eq:rproblem}
\text{minimize $\overline{\mathcal F}$ in the affine space $u_0 + \ker_{\M} \A$}, \tag{$\cl\Pcal$}
\end{align}
for which is possible to guarantee the existence of minimizers.

 Since a (generalized) minimizer~$\overline \mu$ may not be absolutely continuous with respect to $\Lcal^d$, it is not clear 
 in what sense  can ``$\langle \overline \mu, \A^*\overline w^* \rangle$'' be considered a saddle-point of \eqref{eq:rproblem} and \eqref{eq:dualproblem}.	
		 To circumvent the lack of a duality relation in $(\ker_{\M} \A, \im \A^*)$ we introduce a {\it set-valued} pairing as follows:
	\begin{align*}
	\llbracket \mu,\A^* w^* \rrbracket := \setB{\lambda \in \M(\Omega)}{(u_j) \subset & \; u_0 +  \ker \A, \\
		\text{$u_j \to \mu$ area-strictly in $\Omega$, \, and \, } & \text{$(u_j\cdot \A^* w^*) \Lcal^d \toweakstar \lambda$ in $\M(\Omega)$}}.
	\end{align*}

	
We stress that, though our notion of {\it generalized paring} is that of a set-valued pairing, it reduces to a set containing a single Radon measure if stronger regularity assumptions are posed on its arguments $\mu$ or $w^*$. It should also be noticed that the earlier definitions by Anzellotti~\cite{Anzellottti83} for the $(\BV,\Lrm^1 \cap\,\text{$\Div$-free})$ duality, and Kohn and Temam \cite{KohnTemam82,KohnTemam83} in $\BD$ with respect to its dual space, both exploit the {\it potential structure} of gradients and linearized strains, which is not available for the more general constraint $\mu \in \ker_{\M} \A$.
	
	As we will see, it turns out that every $\lambda \in \llbracket \mu , \A^* w^*\rrbracket$ is absolutely continuous with respect to $|\mu|$. 
	Even more, its  absolutely continuous part  with respect to~$\Lcal^d$ is fully determined by $\mu$ and $w^*$ through the relation
	\[
	\frac{\dd \lambda}{\dd\Lcal^d}(x) =\frac{\dd\mu}{\dd\mathcal L^d}(x) \cdot \A^* w^*(x), \qquad \text{for $\mathcal L^d$-a.e. $x \in \Omega$}.
	\]
	This means that, at least formally, elements $\lambda$ in $\llbracket \mu,\A^* w^*\rrbracket$ can be regarded as classical pairings up to a defect singular measure
	$\lambda^s \perp \Lcal^d$.  
	In fact, $\lambda^s$ carries the (generalized) saddle-point conditions as illustrated in our main result:
	\begin{theorem}[Conditions for optimality]\label{thm:optimal} Let $f : \Omega \times \R^N \to [0,\infty)$ be a continuous integrand with linear growth at infinity as in \eqref{eq:linear}, and such that $f(x,\frarg)$ is convex for all $x \in \Omega$. Further assume that there exists a modulus of continuity $\omega$ such that
	\begin{equation}\label{eq:modulus}
	|f(x,z) - f(y,z)| \le \omega(|x - y|)(1 + |z|) \quad \text{for all $x,y \in \Omega$, $z \in \R^N$}.
	\end{equation}
	Then the following conditions are equivalent: 
		\begin{enumerate}
			\item[(i)] $\mu$ is a generalized solution of problem \eqref{eq:problem} and $w^*$ is a solution of \eqref{eq:dualproblem},\vskip5pt
			\item[(ii)] The generalized pairing $\llbracket \mu, \A^* w^*\rrbracket$ is the singleton
			\[
			\left\{ \left(\frac{\dd\mu}{\dd\mathcal L^d} \cdot \A^* w^*\right) \, \mathcal L^d \restrict \Omega ~+ ~ f^\infty\left(\;\frarg \;,\frac{\dd \mu}{\dd |\mu^s|}\right) \, |\mu^s|\right\}.
			\]
			In particular, if $\lambda \in \llbracket \mu,\A^* w^*\rrbracket$, then
			\begin{align*}\label{pointwise}
			\frac{\dd \lambda}{\dd \Lcal^d}(x) & = \frac{\dd\mu}{\dd\Lcal^d}(x) \cdot \A^*w^*(x) \\
			& = f\left(x,\frac{\dd\mu}{\dd\Lcal^d}(x)\right) + f^*(x,\A^*w^*(x))
			\end{align*}
			 for $\mathcal L^d$-a.e. in $x \in \Omega$, and
			 \[
			\frac{\dd \lambda}{\dd |\mu^s|}(x) = f^\infty\bigg(x, \frac{\dd \mu}{\dd |\mu^s|}\bigg) \quad \text{for $|\mu^s|$-a.e. $x \in \Omega$}.
			 \]
		\end{enumerate}
	\end{theorem}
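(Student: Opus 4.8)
The plan is to make both implications hinge on the no-gap identity of Theorem~\ref{prop:nogap} and the relaxation formula of Theorem~\ref{prop:relax}, reducing the equivalence to the single scalar identity $\overline{\Fcal}[\mu] = \Rcal[w^*]$. Throughout write $u_\mu \coloneqq \frac{\dd\mu}{\dd\Lcal^d}$ and record two preliminary facts. First, the linear growth \eqref{eq:linear} forces $\dom f^*(x,\frarg)$ to be contained in a fixed ball, so that any $w^* \in D(\A^*)$ with $\Rcal[w^*] > -\infty$ has $\A^* w^* \in \Lrm^\infty$; in particular the pairing is nonempty, since any area-strict recovery sequence $(u_j)$ for $\mu$ produces $\Lrm^1$-bounded, hence weak-$*$ precompact, measures $(u_j \cdot \A^* w^*)\Lcal^d$. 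Second, for any competitor $u \in u_0 + \ker\A$ the adjoint relation gives $\int_\Omega u \cdot \A^* w^* \dd x = \dpr{w^*,\A u} = \dpr{w^*,\tau}$, independently of $u$. Combining this with the pointwise Fenchel--Young inequality $f(x,z) + f^*(x,z^*) \ge z \cdot z^*$ yields, for every such sequence,
\[
\Fcal[u_j] - \Rcal[w^*] = \sigma_j(\Omega) \ge 0, \qquad \sigma_j \coloneqq \big[f(x,u_j) + f^*(x,\A^* w^*) - u_j \cdot \A^* w^*\big]\Lcal^d \ge 0,
\]
so the behaviour of the Fenchel--Young defect measure $\sigma_j$ governs everything.

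For (i)$\Rightarrow$(ii), Theorem~\ref{prop:nogap} gives $\overline{\Fcal}[\mu] = \Rcal[w^*]$, while the area-strict continuity underlying Theorem~\ref{prop:relax} gives $\Fcal[u_j] \to \overline{\Fcal}[\mu]$ along any defining sequence of a fixed $\lambda \in \llbracket\mu,\A^* w^*\rrbracket$; hence $\sigma_j(\Omega) \to 0$ and, being nonnegative, $\sigma_j \to 0$ in total variation. I would then take the weak-$*$ limit in
\[
\big(u_j \cdot \A^* w^*\big)\Lcal^d = f(x,u_j)\Lcal^d + f^*(x,\A^* w^*)\Lcal^d - \sigma_j,
\]
using the localized Reshetnyak-type convergence $f(x,u_j)\Lcal^d \toweakstar f(x,u_\mu)\Lcal^d + f^\infty\!\big(x,\tfrac{\dd\mu^s}{\dd|\mu^s|}\big)|\mu^s|$ afforded by area-strict convergence, to identify
\[
\lambda = f(x,u_\mu)\Lcal^d + f^*(x,\A^* w^*)\Lcal^d + f^\infty\!\Big(x,\tfrac{\dd\mu^s}{\dd|\mu^s|}\Big)|\mu^s|.
\]
Comparing the absolutely continuous part with the general identity $\frac{\dd\lambda}{\dd\Lcal^d} = u_\mu \cdot \A^* w^*$ recorded for the pairing forces $f(x,u_\mu) + f^*(x,\A^* w^*) = u_\mu \cdot \A^* w^*$ for $\Lcal^d$-a.e.\ $x$, that is, Fenchel--Young saturates; the singular part is read off as $f^\infty(x,\tfrac{\dd\mu}{\dd|\mu^s|})|\mu^s|$. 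Since the limit no longer depends on the chosen sequence, $\llbracket\mu,\A^* w^*\rrbracket$ is the announced singleton and both pointwise relations hold.

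For (ii)$\Rightarrow$(i) I would reverse the bookkeeping. Testing a defining sequence of the unique $\lambda$ against the constant $1$ and using $\int_\Omega u_j \cdot \A^* w^* \dd x = \dpr{w^*,\tau}$ gives $\lambda(\Omega) = \dpr{w^*,\tau}$; on the other hand, evaluating the singleton and inserting the Fenchel--Young equality supplied by (ii) on its absolutely continuous part yields $\lambda(\Omega) = \int_\Omega\!\big[f(x,u_\mu)+f^*(x,\A^* w^*)\big]\dd x + \int_\Omega f^\infty\!\big(x,\tfrac{\dd\mu}{\dd|\mu^s|}\big)\dd|\mu^s| = \overline{\Fcal}[\mu] + \int_\Omega f^*(x,\A^* w^*)\dd x$. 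Equating the two expressions for $\lambda(\Omega)$ gives precisely $\overline{\Fcal}[\mu] = \Rcal[w^*]$. Denoting by $m$ the common value of \eqref{eq:problem} and \eqref{eq:dualproblem} furnished by Theorem~\ref{prop:nogap}, the admissibility bounds $\overline{\Fcal}[\mu] \ge m \ge \Rcal[w^*]$ then collapse to equalities, so $\mu$ minimizes the relaxed problem \eqref{eq:rproblem}, that is, it is a generalized solution of \eqref{eq:problem}, and $w^*$ solves \eqref{eq:dualproblem}.

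The step I expect to be the main obstacle is the passage to the limit on the singular part: one must upgrade the convergence of \emph{total} energies under area-strict convergence to weak-$*$ convergence of the energy \emph{measures} $f(x,u_j)\Lcal^d$ (the localized Reshetnyak continuity for integrands with recession $f^\infty$, where the modulus condition \eqref{eq:modulus} guarantees the required joint continuity), and one must ensure that no pairing mass is lost to $\partial\Omega$ or through oscillation, so that the vanishing of $\sigma_j(\Omega)$ genuinely pins the defect $\lambda^s = f^\infty(x,\tfrac{\dd\mu}{\dd|\mu^s|})|\mu^s|$ on the $\Lcal^d$-singular set. This is exactly where the recession structure and the deliberate use of area-strict, rather than merely weak-$*$, convergence in the definition of $\llbracket\,\sbullet\,,\sbullet\,\rrbracket$ carry the argument.
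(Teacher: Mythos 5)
Your proposal is correct and follows essentially the same route as the paper: both directions reduce to the scalar identity $\overline{\Fcal}[\mu]=\Rcal[w^*]$ via Theorem \ref{prop:nogap}, the weak* convergence of the energy measures $f(\frarg,u_j)\,\Lcal^d$ along area-strict minimizing sequences (your ``localized Reshetnyak'' step, which is exactly Theorem \ref{thm:minimalsequence}), the absolutely continuous characterization of the pairing from Theorem \ref{thm:caracteriza}, and the bookkeeping $\lambda(\Omega)=\dpr{w^*,\tau}$. Your packaging of the limit passage as the vanishing in total variation of the nonnegative Fenchel--Young defect measure $\sigma_j$ is just a rephrasing of the paper's ``inequality of measures plus equality of total masses forces equality.''
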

		
	The paper is organized as follows: Firstly, in Section \ref{sec:preliminaries} we give a short account of the properties of integral functionals defined on measures  and their relation to area-strict convergence. The reminder of the Section recalls some facts of convex duality and the commutativity of the supremum on integral functionals for {\it inf-stable} families of measurable functions. 
	In Section \ref{sec:duality} we rigorously derive the dual variational formulation of \eqref{eq:problem} by means of convex analysis arguments and Ekeland's Variational Principle. Section \ref{sec:relaxation} is devoted to the characterization of the relaxed problem \eqref{eq:rproblem}. In Section \ref{sec:pair}, we study the properties of  pairing $\llbracket \mu , \A^*w^*\rrbracket$, from which the proof of Theorem \ref{thm:optimal} easily follows. 

\subsection*{Acknowledgments} The support of the Hausdorff Center of Mathematics and the Bonn International Graduate School is gratefully acknowledged. The results here presented form part of the author's Ph.D. thesis at the University of Bonn.

\section{Ppreliminaries}\label{sec:preliminaries}

\subsection{Notation}
We shall work in $\Omega \subset \R^d$, an open 
domain with $\Lcal^d(\partial \Omega) = 0$ for which we impose no further regularity assumptions. 

By $\Lrm^p_\mu(\Omega;\R^N)$ we denote the subset of $\Lrm_\mu(\Omega;\R^N)$ of $\mu$-measurable functions on $\Omega$ with values in $\R^N$ which are $p$-integrable with respect to a given positive measure $\mu$; we will simply write  $\Lrm_\mu^p(\Omega)$ instead of $\Lrm_\mu^p(\Omega;\R)$, and  $\Lrm^p(\Omega;\R^N)$ instead of 
$\Lrm^p_{\mathcal L^d}(\Omega;\R^N)$), where $\mathcal L^d$ stands for the $d$-dimensional Lebesgue measure. 

In the course of this work we confine ourselves to the use of bounded Radon measures, therefore we will use the notation  $\M(\Omega;\R^N) \cong (\Crm_b(\Omega;\R^N))^*$ to denote the space of $\R^N$-valued Radon measures on $\Omega$ with finite mass. Similarly to $\Lrm^p$, we will simply write $\M(\Omega)$ instead of $\M(\Omega;\R)$. For an arbitrary measure $\mu \in \M(\Omega;\R^N)$ we will often write $\frac{\dd \mu}{\dd \mathcal L^d} \, \mathcal L^d + \mu^s$ to denote its Radon-Nikod\'ym decomposition with respect to $\mathcal L^d$.

We shall write $x \cdot y$ to denote the inner product between two vectors $x,y \in \R^N$. For function and measure spaces, we reserve the notation $\langle \frarg, \frarg \rangle$
to represent the standard pairing between the space and its dual; where no confusion can arise, we shall not emphasize the position of its arguments.
\subsection{Integrands, lower semicontinuity, and area-strict convergence} We recall some well-known and other recent results concerning integrands and recession functions. 

Following \cite{AlibertBouchitte97} and more recently \cite{RindKris10}, we define $\E(\Omega;\R^N)$ as the class of continuous functions $f:\Omega \times \R^N$ such that the transformation
\[
(Sf)(x,z) := (1 - |z|)f\left(x,\frac{z}{1 - |z|}\right) \quad \text{for $(x,z)\in \Omega \times \Bbb^N$}, 
\]
where $\Bbb^N$ is unit open ball in $\R^N$, can be extended to the space $\Crm(\Omega \times \cl{\Bbb^d})$ by some continuous function $\tilde f$. In particular, for every $f \in \E(\Omega;\R^N)$, there exists a positive constant $M > 0$ such that 
\[
|f(x,z)| \le M(1 + |z|) \quad \text{for all $(x,z)\in \Omega \times \R^N$}, 
\]
and
\[
\tilde f(x,z) = \begin{cases}
(Sf)(x,z) & \text{if $|z|< 1$},\\
f^\infty(x,z) & \text{if $|z|= 1$};
\end{cases}
\]
where the limit 
\[
f^\infty(x,z) =  \lim_{\substack{x' \to x\\t \to \infty}} \frac{f(x',tz)}{t} \qquad (x,z) \in  \Omega \times \R^N,
\]
exists and defines a positively $1$-homogeneous function. 

\begin{lemma}[Recession functions I] \label{rem:rec1}
	If $f : \Omega \times \R^N \to \R$ is a continuous convex integrand with linear growth at infinity with a modulus of continuity $\omega$ as in $\eqref{eq:modulus}$,  then $f \in \E(\Omega;\R^N)$. Moreover, the recession function $f^\infty$ exists, is continuous and has the simplified representation
	\[
	f^\infty(x,z) \coloneqq \lim_{t \to \infty} \frac{f(x,tz)}{t}, \quad \text{for all $(x,z) \in \Omega \times \R^N$}.
	\]
\end{lemma}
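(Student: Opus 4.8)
The plan is to construct $f^\infty$ from the one-sided limit first and only afterwards verify membership in $\E(\Omega;\R^N)$. First I would fix $(x,z) \in \Omega \times \R^N$ and examine the scalar map $t \mapsto f(x,tz)/t$ for $t > 0$. Since $f(x,\frarg)$ is convex, the difference quotient $t \mapsto [f(x,tz) - f(x,0)]/t$ is nondecreasing in $t$; hence $t \mapsto f(x,tz)/t$ is nondecreasing up to the vanishing term $f(x,0)/t$. The upper bound in \eqref{eq:linear} gives $f(x,tz)/t \le M(1/t + |z|)$, so this eventually monotone quantity is bounded above and the limit
\[
\lim_{t \to \infty} \frac{f(x,tz)}{t}
\]
exists for every $(x,z)$. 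This already yields the candidate for the simplified representation, provided it agrees with the full recession limit, which is the next step.

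To pass from the one-variable limit to the genuine recession function I would use the modulus of continuity. Applying \eqref{eq:modulus} at the point $tz$ and dividing by $t$ yields
\[
\left| \frac{f(x',tz)}{t} - \frac{f(x,tz)}{t} \right| \le \omega(|x'-x|)\left( \frac{1}{t} + |z| \right).
\]
Letting $t \to \infty$ and $x' \to x$, the right-hand side tends to $0$, so the double limit defining $f^\infty(x,z)$ exists and coincides with $\lim_{t\to\infty} f(x,tz)/t$. Positive $1$-homogeneity of $f^\infty(x,\frarg)$ is immediate from the definition, and convexity of $f^\infty(x,\frarg)$ follows because it is the pointwise limit of the convex maps $z \mapsto f(x,tz)/t$; a finite convex function on $\R^N$ (finiteness coming from the $M|z|$ bound) is continuous, which gives continuity in $z$. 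Passing to the limit in the displayed estimate yields $|f^\infty(x,z) - f^\infty(y,z)| \le \omega(|x-y|)|z|$, i.e. continuity in $x$ controlled by the modulus; combining this with the local Lipschitz continuity in $z$ coming from convexity and $1$-homogeneity gives joint continuity of $f^\infty$ on $\Omega \times \R^N$.

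It remains to show $f \in \E(\Omega;\R^N)$, i.e. that $Sf$ extends continuously to $\Omega \times \cl{\Bbb^N}$. On the interior $\Omega \times \Bbb^N$ the function $Sf$ is continuous, being a composition of $f$ with the continuous map $(x,z) \mapsto (x, z/(1-|z|))$ weighted by the factor $(1-|z|)$, so the whole point is continuity up to the sphere $\{|z|=1\}$. Writing a boundary point as $z = \theta$ with $|\theta| = 1$ and substituting $t = |z|/(1-|z|)$ so that interior points take the form $z = \tfrac{t}{1+t}\theta$, one computes
\[
(Sf)\left(x, \tfrac{t}{1+t}\theta\right) = \frac{t}{1+t}\cdot \frac{f(x,t\theta)}{t},
\]
which converges to $f^\infty(x,\theta)$ as $t \to \infty$. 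This identifies the natural boundary value with $f^\infty$ and shows the candidate extension $\tilde f$ equals $Sf$ in the interior and $f^\infty$ on the sphere.

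The hard part will be upgrading this pointwise boundary convergence to continuity of $\tilde f$ on all of $\Omega \times \cl{\Bbb^N}$. Here I would again lean on convexity: for fixed $x$, the convex functions $z \mapsto f(x,tz)/t$ converge pointwise to $f^\infty(x,\frarg)$, hence converge locally uniformly on $\R^N$; together with the equicontinuity in $x$ furnished by the modulus estimate, this forces $Sf$ to approach $f^\infty$ uniformly on compact subsets as $|z| \uparrow 1$, which is exactly what is needed to conclude that $\tilde f$ is continuous across the sphere. The continuity of $f^\infty$ established above then closes the argument.
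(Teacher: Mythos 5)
Your argument is correct and shares the paper's overall skeleton: monotonicity of the convex difference quotients gives existence of the radial limit $\lim_{t\to\infty}f(x,tz)/t$, the modulus \eqref{eq:modulus} upgrades this to the double limit defining $f^\infty$ and to continuity of $f^\infty$ in $x$, and the substitution $t=|z|/(1-|z|)$ identifies the boundary values of $Sf$ with $f^\infty$. The one genuine divergence is in how continuity of $\tilde f$ across the sphere $\{|z|=1\}$ is handled, i.e.\ the non-radial approach $z'\to\theta$ with $z'/|z'|\neq\theta$, which you correctly single out as the hard part. The paper controls this by first proving the explicit $x$-uniform Lipschitz bound $\mathrm{Lip}(f(x,\frarg))\le M$, extracted from the inclusion $\setn{p^*}{f^*(x,p^*)<\infty}\subset M\Bbb^N$; that bound absorbs the error $|f(x',t\theta')-f(x',t\theta)|\le Mt|\theta'-\theta|$. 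You instead invoke locally uniform convergence of the convex functions $z\mapsto f(x,tz)/t$ to their finite (hence continuous) convex limit $f^\infty(x,\frarg)$ --- valid here either by Dini, since your quotients increase monotonically to a continuous limit, or by the standard theorem on pointwise limits of convex functions --- and couple it with the $x$-equicontinuity furnished by \eqref{eq:modulus}; a three-term triangle inequality then closes the boundary continuity. Both routes are legitimate and of comparable depth. The only practical remark is that the paper's explicit constant $M$ is not merely an intermediate device: it is reused later (Step~1 of the proof of Theorem~\ref{prop:nogap} cites the $x$-uniform Lipschitz bound from this very proof to get $|f^\delta-f|\le M\delta$), so in the context of the full paper one would still want to record that estimate explicitly rather than bypass it.
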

\begin{proof} First we show that $f(x,\frarg)$ is Lipschitz with $\text{Lip}(f(x,\frarg))\le M$ (independently of $x$). Indeed, by convexity we know that $f(x,\frarg) \in \Wrm^{1,\infty}_\loc(\R^N)$ for all $x \in \Omega$. Fix $x \in \Omega$, then 
\[
\nabla_z f(x,z) \in \partial_z f(x,z) \quad \text{for $\Lcal^N$-almost every $z \in \R^N$}.
\]
Again, by convexity, $p^* \in \partial_z f(x,z)$ if and only if $f^*(p^*) = z \cdot p^* - f(z) \in \R$. Thus, 
$f^*(\nabla f(x,z)) \in \R$ for $\Lcal^N$-almost every $z \in \R^N$. It is easy to check from the linear growth assumption on $f$ that
$\set{p^* \in \R^N}{f^*(p^*) < \infty} \subset M\cdot\Bbb^N$, whereby we deduce that
\[
\|\nabla f(x,\frarg)\|_{\Lrm^\infty} \le M.
\]
This shows that $f(x,\frarg)$ is $x$-uniformly Lipschitz, which together with \eqref{eq:modulus} implies that if $f^\infty$ exists, then
\[
f^\infty(x,z) =  \lim_{t \to \infty} \frac{f(x,tz)}{t}\quad \text{for all $(x,z) \in \Omega\times \R^N$}.
\]
To see that $f^\infty$ exists in $\Omega\times \R^N$ we simply observe that
\begin{equation}\label{eq:nodepende}
\frac{f(x,tz)}{t} = \frac{f(x,tz) - f(x,0)}{t} + \BigO(t^{-1}) \coloneqq I_{x,z}(t) + \BigO(t^{-1}),
\end{equation}
where, by the convexity of $f$, the functions $I_{x,z}(t) \le M$ are monotone for all $(x,z) \in \Omega \times \R^N$.

Finally, to prove that $f \in \E(\Omega ;\R^N)$, we are left to show that $\tilde f$ is continuous at all $(x,z) \in \Omega \times \partial \Bbb^N$ (this, because $f \in \Crm(\Omega \times \R^N)$). Using the modulus of continuity in \eqref{eq:modulus} it is easy to show that $f^\infty$ is continuous, therefore it is enough to show that
\[
\lim_{\substack{x' \to x\\|z'| \toup 1}} \tilde f(x',z) = f^\infty(x,z) \quad \text{for all $x \in \Omega$}.
\]
The latter follows by setting $t(z') \coloneqq \frac{1}{1 - |z'|}$ (which tends to $\infty$ as $|z'| \toup 1$) in \eqref{eq:nodepende}.
\end{proof}

We collect some continuity properties of the class $\E(\Omega;\R^N)$ and recession functions in the following lemmas. The first one is a lower semicontinuity result for convex integrands from \cite{BouchitteValadier88}. The second is a continuity result, originally proved by Reschetnyak in the case of $1$-homogeneous functions \cite{Resh69}, but generalized to lower semicontinuous integrands with linear growth (see for example \cite[Theorem 5]{RindKris10}).

\begin{theorem}
	\label{thm:alibertlsc}Let $(u_j)$ be a bounded sequence in $\Lrm^1(\Omega;\R^N)$ that weakly* converges, in the sense of measures, to a measure $\mu \in \M(\overline \Omega;\R^N)$.  Then 
	\begin{align*}
	\liminf_{j \to \infty} & \int_\Omega f(x,u_j(x)) \dd x \ge  \\ 
	& \int_{\Omega} f\left(x,\frac{\dd \mu}{\dd \Lcal^d}(x)\right) \, \textnormal{d}x + \int_{\Omega} f^\infty\left(x,\frac{\dd \mu^s}{\dd |\mu^s|}(x)\right) \, \textnormal{d}|\mu^s|(x),
	\end{align*}
	for all non-negative integrands $f \in \E(\Omega;\R^N)$. 
\end{theorem}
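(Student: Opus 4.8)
The plan is to reduce the inequality to a \emph{dual (supremum) representation} of the limiting functional and then pair it with the pointwise Fenchel--Young inequality. Abbreviate the right-hand side by
\[
\Gcal[\mu] := \int_{\Omega} f\!\left(x,\frac{\dd \mu}{\dd \Lcal^d}\right) \dd x + \int_{\Omega} f^\infty\!\left(x,\frac{\dd \mu^s}{\dd |\mu^s|}\right) \dd |\mu^s|.
\]
The representation I would isolate is
\[
\Gcal[\mu] = \sup\left\{ \int_\Omega \phi \cdot \dd\mu - \int_\Omega f^*(x,\phi(x)) \dd x : \phi \in \Crm_b(\Omega;\R^N),\ f^*(\frarg,\phi) \in \Lrm^1(\Omega)\right\}.
\]
Granting this, the lower-semicontinuity inequality follows almost instantly: by convexity the pointwise Fenchel--Young inequality gives $f(x,u_j(x)) \ge \phi(x)\cdot u_j(x) - f^*(x,\phi(x))$ for every admissible $\phi$, so integrating and using $u_j\,\Lcal^d \toweakstar \mu$ (whence $\int_\Omega \phi\cdot u_j \dd x \to \int_\Omega \phi\cdot\dd\mu$, as $\phi\in\Crm_b$) yields $\liminf_j \int_\Omega f(x,u_j)\dd x \ge \int_\Omega \phi\cdot\dd\mu - \int_\Omega f^*(x,\phi)\dd x$; taking the supremum over $\phi$ and invoking the representation closes the argument.

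To set up the representation I would first record the structural facts about $f^*$ and $f^\infty$. Since $f(x,\frarg)$ is convex and, by Lemma \ref{rem:rec1}, uniformly $M$-Lipschitz in $z$, its subgradients are bounded by $M$, so $\dom f^*(x,\frarg)\subseteq M\cl{\Bbb}^N$ for every $x$; moreover $f=f^{**}$ in the $z$-variable, and $f^\infty(x,\frarg)$ is precisely the support function of $\dom f^*(x,\frarg)$, i.e. $f^\infty(x,v)=\sup\{\xi\cdot v : f^*(x,\xi)<\infty\}$. These two identities are exactly what allow a single family $\{\phi\cdot z - f^*(x,\phi)\}$ to recover $f$ on the Lebesgue part and $f^\infty$ on the singular part of $\mu$ simultaneously. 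The inequality $\Gcal[\mu]\ge\sup_\phi\{\cdots\}$ is then the elementary half: any admissible $\phi$ satisfies $\phi(x)\cdot z - f^*(x,\phi)\le f(x,z)$ and, since necessarily $\phi(x)\in\dom f^*(x,\frarg)$, also $\phi(x)\cdot v\le f^\infty(x,v)$, so splitting $\mu=\tfrac{\dd\mu}{\dd\Lcal^d}\Lcal^d+\mu^s$ bounds the dual expression by $\Gcal[\mu]$.

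The reverse inequality $\Gcal[\mu]\le\sup_\phi\{\cdots\}$ is the crux, and here I would invoke the commutation of the supremum with the integral for inf-stable families recalled in Section \ref{sec:preliminaries}. A measurable selection produces a bounded measurable field $\psi$ with $\psi(x)\in\partial_z f(x,\tfrac{\dd\mu}{\dd\Lcal^d}(x))$ for $\Lcal^d$-a.e. $x$ (so Fenchel--Young is an equality on the Lebesgue part) and with $\psi(x)$ nearly maximizing $\xi\mapsto\xi\cdot\tfrac{\dd\mu^s}{\dd|\mu^s|}(x)$ over $\dom f^*(x,\frarg)$ for $|\mu^s|$-a.e. $x$ (so it nearly saturates $f^\infty$ on the singular part); the inf-stability of the competing family then permits replacing $\psi$ by an \emph{admissible continuous} $\phi$ through a Lusin/Scorza--Dragoni approximation, using the joint continuity of $f^*$ on $\Omega\times M\cl{\Bbb}^N$ carried by the condition $f\in\E(\Omega;\R^N)$ and the modulus \eqref{eq:modulus}, while controlling both integral errors at once.

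The hard part will be exactly this simultaneous saturation: a single continuous field $\phi$ must come within $\eps$ of equality in Fenchel--Young on the absolutely continuous part \emph{and} within $\eps$ of the support function on the $\Lcal^d$-singular part, two demands that live on mutually singular measures. This is the whole content of the commutation lemma for inf-stable families, and its applicability rests on the continuity package of the class $\E(\Omega;\R^N)$ (continuity of $\tilde f$ up to the recession boundary $|z|=1$, equivalently joint continuity of $f^*$ on its effective domain). Away from this step everything is routine; in particular the restriction to $\phi$ with $f^*(\frarg,\phi)\in\Lrm^1$ is harmless, since such $\phi$ automatically take values in the fixed ball $M\cl{\Bbb}^N$.
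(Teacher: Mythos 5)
The paper offers no proof of Theorem \ref{thm:alibertlsc} at all: it is quoted verbatim from \cite{BouchitteValadier88}, and your duality strategy (dual representation of the limit functional plus Fenchel--Young) is indeed the strategy of that reference, so the route is right in spirit. But as written your argument has a genuine gap already in the ``easy'' direction. You test the weak* convergence against arbitrary $\phi\in\Crm_b(\Omega;\R^N)$, while the hypothesis only puts the limit in $\M(\cl\Omega;\R^N)$, so mass may reach $\partial\Omega$; then the claim $\int_\Omega\phi\cdot u_j\,\dd x\to\int_\Omega\phi\cdot\dd\mu$ is simply false. Concretely, for $\Omega=(0,1)$, $u_j=j\,\ONE_{(0,1/j)}$ one has $u_j\Lcal^1\toweakstar\delta_0\in\M([0,1])$, and taking $\phi\equiv-1$ gives $\int_\Omega\phi\,u_j\,\dd x=-1$ for every $j$, whereas $\int_\Omega\phi\,\dd\mu=0$ because $\delta_0$ charges no mass to the open interval; the inequality $\liminf_j\int_\Omega\phi\,u_j\,\dd x\ge\int_\Omega\phi\,\dd\mu$ that your argument needs therefore fails (the theorem survives in this example only because $f\ge0$). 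The standard repair is to use test fields in $\Crm_c(\Omega;\R^N)$, which extend by zero to $\cl\Omega$ and are legitimate test functions --- but then your representation formula must be established for this smaller class, which is the same hard direction all over again.

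And that hard direction is not actually proved: it leans on ``joint continuity of $f^*$ on $\Omega\times M\cl\Bbb^N$,'' which is false. Whenever $\dom f^*(x,\frarg)$ is a proper subset of the ball $M\cl\Bbb^N$ (e.g. $f(z)=|z_1|+\tfrac12|z_2|$, whose conjugate is the indicator function of a box), $f^*(x,\frarg)$ jumps to $+\infty$ across the boundary of its effective domain; continuity relative to the domain can fail as well for lower semicontinuous convex functions. This is exactly what breaks the Lusin/Scorza--Dragoni step you sketch: after modifying the measurable near-maximizer $\psi$ on an exceptional set of small $(\Lcal^d+|\mu^s|)$-measure to make it continuous, nothing keeps the continuous field inside the $x$-dependent set $\dom f^*(x,\frarg)$ there, and if $\phi(x)\notin\dom f^*(x,\frarg)$ on a set of positive $\Lcal^d$-measure then $\int_\Omega f^*(x,\phi)\,\dd x=+\infty$, so the competitor is worthless --- smallness of the exceptional set is no protection against $+\infty$. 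Overcoming precisely this obstruction (by working with the inf-stable family of continuous selections of $x\mapsto\dom f^*(x,\frarg)$ and applying Theorem \ref{stability} to it) \emph{is} the content of the proof in \cite{BouchitteValadier88}, so deferring to ``the commutation lemma'' is circular: that lemma's applicability here is the theorem you were asked to prove. A self-contained alternative avoids $f^*$ altogether: write $f(x,z)=\sup_k\{a_k(x)+b_k(x)\cdot z\}$ as a countable supremum of affine-in-$z$ functions with continuous coefficients, pass to the limit on finitely many disjoint open sets compactly contained in $\Omega$ (where $\Crm_c$ testing suffices), and conclude with the lemma on suprema of measures. Finally, a minor scope issue: your appeals to the modulus \eqref{eq:modulus} and Lemma \ref{rem:rec1} import hypotheses that Theorem \ref{thm:alibertlsc} does not contain --- only $f\in\E(\Omega;\R^N)$, $f\ge0$ and the (implicit) convexity in $z$ are available, although the uniform Lipschitz bound and the continuity of $f^\infty$ you actually use can be recovered from these alone.
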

We introduce the following short notation for the (generalized) area functional of $\mu$,
\begin{equation}\label{eq:areaf}
\langle \mu \rangle(A) := \int_{A} \sqrt{1 + \left|\frac{\dd \mu}{\dd \Lcal^d}(x)\right|^2} \dd x + |\mu^s|(A), \end{equation}
defined on Borel sets $A \subset \R^d$.

We say that a sequence of measures $(\mu_j)$ area-strict converges to $\mu$ if 
\[\mu_j \toweakstar \mu  \quad \text{in $\M(\Omega;\R^N)$} \quad \text{and \quad
$\area{\mu_j}(\Omega) \to \area{\mu}(\Omega)$}.\]
This notion of convergence turns out to be stronger than the usual strict convergence as the latter allows one-dimensional oscillations. The motivation behind the definition of area-strict convergence is that one can formulate the following generalized version of Reschetnyak's Continuity Theorem:

\begin{theorem} The functional  
	\[
	\mu \mapsto \int_{\Omega} f\left(x,\frac{\dd \mu}{\dd \Lcal^d}(x)\right) \, \textnormal{d}x + \int_{\Omega} f^\infty\left(x,\frac{\dd \mu^s}{\dd |\mu^s|}(x)\right) \, \textnormal{d}|\mu^s|(x),
	\]
	is area-strictly continuous in $\Omega$ for every integrand $f \in \E(\Omega;\R^N)$.
	\label{thm:resch} 
\end{theorem}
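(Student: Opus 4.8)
The plan is to reduce the statement to the classical continuity theorem of Reshetnyak for positively $1$-homogeneous integrands \cite{Resh69} by \emph{lifting} each measure to a measure with values in $\R^{N+1} = \R^N \times \R$. To $\mu \in \M(\Omega;\R^N)$ I associate the lift
\[
\hat\mu := (\mu, \Lcal^d \restrict \Omega) \in \M(\Omega;\R^{N+1}).
\]
Since the absolutely continuous part $(\tfrac{\dd\mu}{\dd\Lcal^d},1)\,\Lcal^d$ and the singular part $(\mu^s,0)$ of $\hat\mu$ are mutually singular, a direct computation of the total variation yields $|\hat\mu| = \area{\mu}$, the area measure of \eqref{eq:areaf}; its polar vector is
\[
\frac{\dd\hat\mu}{\dd|\hat\mu|} = \Bigl(\tfrac{\dd\mu}{\dd\Lcal^d},1\Bigr)\Big/\sqrt{1+\bigl|\tfrac{\dd\mu}{\dd\Lcal^d}\bigr|^2}\ \ (\Lcal^d\text{-a.c. part}), \qquad \frac{\dd\hat\mu}{\dd|\hat\mu|} = \Bigl(\tfrac{\dd\mu^s}{\dd|\mu^s|},0\Bigr)\ \ (\text{singular part}).
\]
In particular the polar vector always takes values in the closed upper hemisphere $\Sbb^N_+ := \setn{(z,t)\in\Sbb^N}{t\ge 0}$ of the unit sphere $\Sbb^N \subset \R^{N+1}$.

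The first observation is that $\mu \mapsto \hat\mu$ converts area-strict convergence into strict convergence. Because the last component of $\hat\mu$ is the fixed measure $\Lcal^d\restrict\Omega$, one has $\mu_j \toweakstar \mu$ if and only if $\hat\mu_j \toweakstar \hat\mu$ in $\M(\Omega;\R^{N+1})$, while the mass condition $\area{\mu_j}(\Omega)\to\area{\mu}(\Omega)$ is, via $|\hat\mu|=\area{\mu}$, precisely $|\hat\mu_j|(\Omega)\to|\hat\mu|(\Omega)$. Hence $\mu_j\to\mu$ area-strictly in $\Omega$ if and only if $\hat\mu_j\to\hat\mu$ strictly in $\M(\Omega;\R^{N+1})$.

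Next I rewrite the functional as a Reshetnyak integral for the lift. Let $g:\Omega\times(\R^N\times[0,\infty))\to\R$ be the positively $1$-homogeneous perspective of $f$,
\[
g(x,(z,t)) := \begin{cases} t\,f(x,z/t), & t>0,\\ f^\infty(x,z), & t=0, \end{cases}
\]
and let $G := g|_{\Omega\times\Sbb^N_+}$ be its restriction to the sphere. Evaluating $g$ along the two polar vectors above and using $\dd|\hat\mu| = \sqrt{1+|\tfrac{\dd\mu}{\dd\Lcal^d}|^2}\,\dd x$ on the a.c. part and $\dd|\hat\mu| = \dd|\mu^s|$ on the singular part, the homogeneity factors cancel and the integral $\int_\Omega G(x,\tfrac{\dd\hat\mu}{\dd|\hat\mu|})\dd|\hat\mu|$ collapses exactly to the sum $\int_\Omega f(x,\tfrac{\dd\mu}{\dd\Lcal^d})\dd x + \int_\Omega f^\infty(x,\tfrac{\dd\mu^s}{\dd|\mu^s|})\dd|\mu^s|$. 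Thus the functional of the theorem equals $\mu\mapsto\int_\Omega G(x,\tfrac{\dd\hat\mu}{\dd|\hat\mu|})\dd|\hat\mu|$.

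The heart of the matter, and the expected main obstacle, is the continuity of $G$ on $\Omega\times\Sbb^N_+$, which is exactly what the hypothesis $f\in\E(\Omega;\R^N)$ supplies. Boundedness is immediate from the growth bound $|f(x,z)|\le M(1+|z|)$, giving $|g(x,(z,t))|\le M(t+|z|)\le\sqrt2\,M$ on $\Sbb^N$. For continuity, the only delicate locus is the equator $\{t=0\}$, where the interior values of $f$ must match $f^\infty$: under the ($x$-independent) homeomorphism between the open upper hemisphere and $\R^N$ sending $(z,t)\mapsto\zeta:=z/t$, the function $G$ coincides, up to the strictly positive continuous factor $(1+|\zeta|)/\sqrt{1+|\zeta|^2}$, with the map $\tilde f$ from Section~\ref{sec:preliminaries}, whose continuity up to $\partial\Bbb^N$ (including the identification $\tilde f=f^\infty$ there) is the defining property of the class $\E(\Omega;\R^N)$. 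Since all polar vectors $\tfrac{\dd\hat\mu_j}{\dd|\hat\mu_j|}$ lie in $\Sbb^N_+$, the values of $G$ off $\Sbb^N_+$ are irrelevant and I extend $G$ by Tietze to a bounded continuous function on $\Omega\times\Sbb^N$. Applying the classical Reshetnyak continuity theorem to the strictly convergent sequence $\hat\mu_j\to\hat\mu$ with this integrand gives
\[
\int_\Omega G\Bigl(x,\tfrac{\dd\hat\mu_j}{\dd|\hat\mu_j|}\Bigr)\dd|\hat\mu_j| \tolong \int_\Omega G\Bigl(x,\tfrac{\dd\hat\mu}{\dd|\hat\mu|}\Bigr)\dd|\hat\mu|,
\]
which is the asserted area-strict continuity.
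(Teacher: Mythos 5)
Your argument is correct, but note that the paper does not actually prove Theorem~\ref{thm:resch}: it imports it as a known result, citing \cite{Resh69} for the $1$-homogeneous case and \cite[Theorem 5]{RindKris10} for the general linear-growth version. What you have written is, in essence, the standard proof behind that citation: the lift $\hat\mu=(\mu,\Lcal^d\restrict\Omega)$ with $|\hat\mu|=\area{\mu}$, the observation that area-strict convergence of $\mu_j$ is exactly strict convergence of $\hat\mu_j$ in $\M(\Omega;\R^{N+1})$, the perspective function $g$ whose restriction to the upper hemisphere reproduces the functional, and a reduction to Reshetnyak's continuity theorem for bounded continuous integrands on $\Omega\times\Sbb^N$. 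All the individual steps check out. Two small points are worth making explicit. First, the factor $(1+|\zeta|)/\sqrt{1+|\zeta|^2}$ relating $G$ to $\tilde f$ equals $t+|z|$ in the coordinates of the sphere (since $\sqrt{1+|\zeta|^2}=1/t$ and $|\zeta|=|z|/t$), so it extends continuously to the equator with value $|z|=1$; this is precisely why continuity of $\tilde f$ on $\Omega\times\cl{\Bbb^N}$, i.e.\ membership in $\E(\Omega;\R^N)$, transfers to continuity of $G$ on $\Omega\times\Sbb^N_+$ including $\{t=0\}$. Second, since $f\in\E(\Omega;\R^N)$ only gives $\tilde f\in\Crm(\Omega\times\cl{\Bbb^N})$ and not continuity up to $\partial\Omega$, you need the version of Reshetnyak's theorem valid for integrands that are merely bounded and continuous on $\Omega\times\Sbb^N$ (e.g.\ Ambrosio--Fusco--Pallara, Theorem 2.39); that version does hold, so the application is legitimate, but it is the correct reference to invoke rather than the classical statement for integrands continuous on $\cl\Omega\times\Sbb^N$.
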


 \begin{remark}\label{rem:sharpness}It can be easily seen that area-strict convergence is a sharp condition for the continuity of integral functionals defined on measures by taking $f(x,z) := \sqrt{1 + |z|^2} \in \E(\Omega;\R^N)$ and observing that $f^\infty(x,z) = |z|$.
\end{remark}

\subsection{Duality on convex optimization}

We recall some facts of the theory of convex functions. We follow closely those ideas from \cite[Ch. III]{EkelandTemamBook} but we develop them in the slight more general case 
for {\it perturbations with unbounded linear operators}. Along this chapter $X$ and $Y$ will be two topological 
vector spaces placed in duality with their duals $X^*$ and $Y^*$ by the pairing 
$\dpr{\frarg,\frarg}_{X^*\times X}$ (analogously for $Y$ and $Y^*$). The subscript notation
will be dropped as it is understood that the correspondent pairing apply only on their respective domains.
For a continuous function $F : X \to \overline \R$, we define a lower semi-continuous, and convex function by letting
\[
F^*(u^*) := \sup_X \big\{\langle u, u^*\rangle - F(u)\big\}, \qquad u^* \in X^*.
\]
This function is  known as the conjugate function of $F$.
We will be concerned with the minimization problem 
\begin{equation}\label{eq:primalpre}
\text{minimize $F$ in $X$},\tag{p}
\end{equation}
which we term as the {\it primal problem}.

\subsection*{Perturbations} Assume we are given a function 
\begin{align*}
\Phi :  X \times Y \to \overline \R : (u,p) \mapsto \Phi(u,p),
\end{align*}such that
\begin{equation}\label{perturbation 0}
\Phi(u,0) = F(u), \qquad ~\text{for all $u\in X$}.
\end{equation}
The dependence on $p$ is commonly understood as the {\it perturbed problem}.

\subsubsection{The dual problem} Let $\Phi^* : X^* \times Y^* \to \overline \R$ be the conjugate of $\Phi$ in the 
duality $(X\times Y,X^* \times Y^*)$. We define the dual problem of \eqref{eq:primalpre} with respect to the perturbation $\Phi$ as
\begin{equation}\label{eq:dualpre}
\text{maximize $\big\{p^* \mapsto - \Phi^*(0,p^*)\big\}$ in $Y^*$}. \tag{$\text{p}^*$}
\end{equation}
We also define a function, known as the Lagrangian, $L: X \times Y^* \to \overline \R$ by setting
\begin{align*}\label{lagrangian}
-L(u,p^*) & := \big(\Phi(u,\frarg)\big)^*(p^*) \\
& = \sup_{p \in Y} \big\{ \langle p^*,p\rangle - \Phi(u,p)\big\}.
\end{align*}
Hence, 
\begin{equation*}
- \Phi^*(0,p^*) = \inf_{u \in X} L(u,p^*),
\end{equation*}
and if  additionally the function $p\mapsto \Phi(u,p)$ is convex and l.s.c in $Y$, then
\begin{equation}\label{observation}
\Phi(u,0) = \sup_{p^* \in Y^*} L(u,p^*).
\end{equation}
In this case, we observe that
\begin{align*}
\sup \eqref{eq:dualpre} & = \sup_{p^* \in Y^*}\inf_{u \in X} L(u,p^*),\\
\inf \eqref{eq:primalpre} & = \inf_{u \in X}\sup_{p^* \in Y^*} L(u,p^*).
\end{align*}

\subsection{Inf-stability}

Next, we recall some facts on the commutativity of the supremum of integral functionals valued on a certain family 
$\mathscr F$ of measurable functions. The definitions and results gathered here can be found in~\cite[Theorem 1]{BouchitteValadier88}.
During this chapter $\mu$ will denote an arbitrary positive measure. 
\begin{definition}
	A set $\mathscr F$ of $\Lrm_\mu(\Omega;\R^N)$ is said to be \emph{inf-stable} if for any continuous partition of unity 
	$(\alpha_1,...,\alpha_m)$ such that $\alpha_1,...,\alpha_m \in \Crm(\overline \Omega)$, for every $u_1,...,u_m$ in $\mathscr F$,
	the sum $\sum_{i = 1}^m \alpha_iu_i$ belongs to $\mathscr F$. A subset $\mathscr F$ of $\Lrm_\mu(\Omega)$ is called 
	\emph{$\Crm^1$ {inf-stable}} family if for every partition of unity $(\alpha_1,...,\alpha_m)$ such that 
	$\alpha_1,...,\alpha_m \in C^1(\overline \Omega)$ there exists $u \in \mathscr F$ such that $u \le \sum_{i = 1}^m 
	\alpha_iu_i$.
\end{definition}

\begin{theorem}
	For any subset $\mathscr F$ of $\Lrm_\mu(\Omega;\R^N)$ there exists a smallest closed-valued measurable multifunction
	$\Gamma$ such that for all $u \in \mathscr F$, $u(x) \in \Gamma(x)$ $\mu$-a.e. (as smallest refers to inclusion).
	Moreover, there exists a sequence $(u_j)$ in $\mathscr F$ such that $\Gamma(x) = \cl{\{u_j(x)\}}$ for $\mu$-a.e. $x 
	\in \Omega$.
\end{theorem}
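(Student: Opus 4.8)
The plan is to reduce the statement to a single classical tool — the existence of the \emph{essential union} of a family of measurable sets — applied countably many times against a countable base of $\R^N$. Fix a countable base $\Bcal$ of open balls for the topology of $\R^N$. For each $B \in \Bcal$ I would consider the family of measurable sets $\{\{x : u(x) \in B\} : u \in \mathscr F\}$ and let $E_B$ be its essential union, that is, the smallest (modulo $\mu$-null sets) measurable set containing every $\{u \in B\}$ up to a null set. The existence of $E_B$, \emph{together with} the fact that it is attained by a countable subfamily $\{u^B_k\}_k \subset \mathscr F$ in the sense that $E_B = \bigcup_k \{u^B_k \in B\}$ up to a null set, is the crux of the argument; I return to it at the end.

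Granting the essential unions, I would collect the countably many functions $\{u^B_k : B \in \Bcal,\ k \in \N\}$ into a single sequence $(u_j) \subset \mathscr F$ and define
\[
\Gamma(x) := \cl{\{u_j(x) : j \in \N\}}, \qquad x \in \Omega.
\]
By construction $\Gamma$ is closed-valued, and it is measurable: for every open $U \subset \R^N$ one has $\{x : \Gamma(x) \cap U \ne \emptyset\} = \bigcup_j u_j^{-1}(U)$, a countable union of measurable sets, where openness of $U$ is what lets one drop the closure (a limit point of $\{u_j(x)\}$ lying in the open set $U$ forces some $u_j(x)$ into $U$).

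The substantive point is that this $\Gamma$ contains every member of $\mathscr F$ almost everywhere, not merely the selected $u_j$. For a fixed $u \in \mathscr F$ I would argue by contradiction on $S := \{x : u(x) \notin \Gamma(x)\}$: for each $x \in S$, since $\Gamma(x)$ is closed and $\Bcal$ is a base, there is $B \in \Bcal$ with $u(x) \in B$ and $u_j(x) \notin B$ for all $j$; in particular $u^B_k(x) \notin B$ for every $k$, so $x \notin \bigcup_k \{u^B_k \in B\}$, while at the same time $x \in \{u \in B\} \subset E_B$ up to a null set. Since $E_B = \bigcup_k \{u^B_k \in B\}$ up to a null set, each such $x$ lies in a fixed $\mu$-null set $N_B$; as $\Bcal$ is countable, $S \subset \bigcup_{B \in \Bcal} N_B$ is $\mu$-null. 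Minimality then comes for free: if $\Gamma'$ is any closed-valued measurable multifunction with $u \in \Gamma'$ $\mu$-a.e. for all $u \in \mathscr F$, then $u_j(x) \in \Gamma'(x)$ for $\mu$-a.e. $x$ and every $j$, and closedness of $\Gamma'(x)$ yields $\Gamma(x) = \cl{\{u_j(x)\}} \subset \Gamma'(x)$ $\mu$-a.e.; uniqueness up to null sets is immediate by comparing two candidates. Note that the displayed formula for $\Gamma$ is exactly the \enquote{Moreover} assertion, so it is produced automatically.

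The main obstacle — and the only place where genuine work is needed — is the essential union lemma. Here I would first reduce to a finite measure, which is harmless since $\mu$ is $\sigma$-finite (covering the Radon measures at hand), by passing to an equivalent finite measure with the same null sets. Writing $A_i := \{u_i \in B\}$, I would set
\[
s := \sup\setB{\mu\Big(\bigcup_{i \in J} A_i\Big)}{J \subset I \text{ countable}}.
\]
Choosing countable index sets $J_n$ with $\mu(\bigcup_{i \in J_n} A_i) \to s$ and putting $J_* := \bigcup_n J_n$, the set $E_B := \bigcup_{i \in J_*} A_i$ is measurable, attains the supremum, and satisfies $\mu(A_i \setminus E_B) = 0$ for every $i$ (otherwise adjoining $i$ to $J_*$ would produce a countable union of $\mu$-measure exceeding $s$). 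This simultaneously delivers $E_B$ and the realizing countable subfamily $\{u^B_k\}_k$, closing the gap; everything else is bookkeeping over the countable base $\Bcal$.
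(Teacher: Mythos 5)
Your proof is correct. The paper does not prove this statement at all --- it is imported verbatim from \cite[Theorem 1]{BouchitteValadier88} --- and your argument is essentially the classical Castaing--Valadier/Bouchitt\'e--Valadier construction: essential unions of the level sets $\{u \in B\}$ over a countable base $\Bcal$, each realized by a countable subfamily, assembled into $\Gamma(x) = \cl{\{u_j(x)\}}$. All the steps check out: the measurability criterion $\{x : \Gamma(x) \cap U \ne \emptyset\} = \bigcup_j u_j^{-1}(U)$ is valid precisely because $U$ is open, the containment $u(x) \in \Gamma(x)$ a.e.\ for arbitrary $u \in \mathscr F$ follows from the countability of $\Bcal$ as you say, and the exhaustion argument for the essential union is the standard one. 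The only caveat worth recording is that the paper states the result for ``an arbitrary positive measure,'' whereas your reduction to a finite measure needs $\sigma$-finiteness (for genuinely non-$\sigma$-finite measures the essential supremum of a family of measurable sets need not exist without a localizability hypothesis); since every measure actually used in the paper is a finite Radon measure, this costs nothing here, but you should state the hypothesis explicitly rather than leave it implicit in ``covering the Radon measures at hand.''
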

	We say that $\Gamma$ is the {\it essential supremum} of the multifunctions 
	\[x \mapsto \setb{u(x)}{u \in \mathscr F},\] in symbols 
	\[
	\displaystyle \Gamma(\frarg) =  \esssup\setb{u(\frarg)}{u \in \mathscr F}
	\]
\begin{theorem}\label{stability}
	Let $j:\Omega \times \R^N \to \overline\R$ be a normal convex integrand. Denote by $J$ the functional 
	\[
 u \mapsto \int_\Omega j(x,u(x)) \dd\mu(x), \qquad \text{for all $u \in \Lrm_\mu(\Omega,\R^N)$}.
	\]
	Let $\mathscr F$ be an \emph{inf-stable} family in $\Lrm_\mu(\Omega,\R^N)$. Assume furthermore that $J$ is proper within $\mathscr F$, 
	i.e., there exists $u_0 \in \mathscr F$ such that $J(u_0) \in \R$. Then, 
	\[
	\inf_{u \in \mathscr F} J(u) = \int_\Omega \inf_{z \in \Gamma(x)} ~ j(x,z) \dd\mu(x),
	\]
	and
	\[
	\inf_{z \in \Gamma(x)} j(\frarg,z) = \esssup \setb{ j(\frarg,u) }{ u \in \mathscr F, J(u) < +\infty} .
	\]
	Moreover, if $\mathscr F$ is a \emph{$C^1$ inf-stable} family of $\Lrm_\mu(\Omega)$, then
	\[
	\inf_{u \in \mathscr F} \int_\Omega u \, d\mu = \int_\Omega \esssup \setb{u(x)}{u \in \mathscr F}  \dd\mu(x). 
	\]
\end{theorem}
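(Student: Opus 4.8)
The plan is to prove each of the two equalities by two opposite inequalities, the whole difficulty being concentrated in turning measurable, pointwise near-minimizers into a \emph{single} competitor belonging to $\mathscr F$; this is exactly the role of inf-stability. Throughout, set $m(x) \coloneqq \inf_{z \in \Gamma(x)} j(x,z)$, which is $\mu$-measurable because $j$ is a normal integrand and $\Gamma$ is a closed-valued measurable multifunction; since $u_0(x) \in \Gamma(x)$ $\mu$-a.e.\ one has $m \le j(\frarg,u_0)$ and therefore $\int_\Omega m \dd\mu \le J(u_0) < +\infty$. The trivial inequalities come first: every $u \in \mathscr F$ satisfies $u(x) \in \Gamma(x)$ for $\mu$-a.e.\ $x$, hence $j(x,u(x)) \ge m(x)$ $\mu$-a.e.; integrating and taking the infimum over $\mathscr F$ yields $\inf_{u \in \mathscr F} J(u) \ge \int_\Omega m \dd\mu$, and likewise $m$ is a lower bound for the family $\{j(\frarg,u) : u \in \mathscr F,\ J(u) < \infty\}$, so $\esssup$ of that family (read as the essential infimum) is $\ge m$.

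For the reverse inequalities I would use the representation $\Gamma(x) = \cl{\{u_k(x)\}}$ for a fixed sequence $(u_k) \subset \mathscr F$ given by the essential-supremum theorem. Lower semicontinuity and convexity of $j(x,\frarg)$, together with a measurable selection compatible with $j$, give the pointwise identity $m(x) = \inf_k j(x,u_k(x))$ for $\mu$-a.e.\ $x$, which already yields $\esssup\{j(\frarg,u)\} \le m$ and hence the second displayed equality. Fixing $\eps > 0$, a measurable-selection argument then produces a countable Borel partition $\Omega = \bigsqcup_i B_i$ and indices $k(i)$ with $j(x,u_{k(i)}(x)) \le m(x) + \eps$ on $B_i$; truncating to finitely many cells $B_1,\dots,B_p$ and using $\int_\Omega m \dd\mu < \infty$ to make the discarded mass small, I reduce to a finite, essentially optimal decomposition.

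The crucial construction is to replace the characteristic functions $\ONE_{B_i}$ by a genuine continuous partition of unity $(\alpha_1,\dots,\alpha_p)$ with $\alpha_i \in \Crm(\overline\Omega)$ that differs from $\ONE_{B_i}$ only on a set of small $\mu$-measure; this is where inner/outer regularity of $\mu$ and a Lusin (Scorza--Dragoni) approximation enter. By inf-stability the convex combination $u \coloneqq \sum_{i=1}^p \alpha_i u_{k(i)}$ lies in $\mathscr F$, and convexity of $j(x,\frarg)$ gives the pointwise estimate $j(x,u(x)) \le \sum_{i=1}^p \alpha_i(x)\, j(x,u_{k(i)}(x))$; integrating, the right-hand side is within $O(\eps)$ of $\int_\Omega m \dd\mu$ up to the contribution of the exceptional set, which is controlled by the integrability of $m$ and of $j(\frarg,u_0)$. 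Sending $\eps \to 0$ and letting the exceptional mass vanish closes the first equality. The $\Crm^1$ inf-stable case follows by running the same scheme on the scalar linear integrand $j(x,z) = z$: linearity replaces convexity so there is no loss in the estimate, and the defining property of $\Crm^1$ inf-stability supplies directly an element of $\mathscr F$ lying below $\sum_i \alpha_i u_{k(i)}$, which is enough to pass to the integral of the pointwise essential infimum.

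I expect the main obstacle to be this regularization step: converting measurable near-optimal selections into an admissible continuous (respectively $\Crm^1$) partition of unity while simultaneously controlling the integral error on the exceptional set, where the competitors are known to be integrable only through the properness hypothesis. Once this is done, the membership $u \in \mathscr F$ and the convexity inequality are routine, so the genuine work lies in the interplay between measurable selection, Lusin-type approximation, and the uniform integrability needed to discard the bad set.
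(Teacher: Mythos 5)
First, a point of reference: the paper does not prove this statement at all --- it is imported verbatim from Bouchitt\'e and Valadier (the text preceding it says the results of this subsection ``can be found in \cite[Theorem 1]{BouchitteValadier88}''), so there is no in-paper argument to compare yours against. Judged on its own terms, your overall strategy is the right one for this kind of commutation result: the trivial inequality by pointwise domination, and for the converse a near-optimal measurable selection on the cells of a countable partition, replacement of the indicators by a continuous partition of unity via regularity of $\mu$ and a Lusin/Scorza--Dragoni step, and then inf-stability together with the convexity estimate $j(x,\sum_i\alpha_i(x)u_i(x))\le\sum_i\alpha_i(x)\,j(x,u_i(x))$ to produce a single competitor in $\mathscr F$. (You also correctly read the paper's $\esssup$ macro as an essential \emph{infimum}.)

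There is, however, a genuine gap at the step you dispatch most quickly. You claim that lower semicontinuity and convexity of $j(x,\frarg)$ yield $m(x)=\inf_k j(x,u_k(x))$ $\mu$-a.e., where $\Gamma(x)=\cl{\{u_k(x)\}}$. Lower semicontinuity gives the inequality in the wrong direction: for $z\in\Gamma(x)$ a limit of $u_{k_n}(x)$ one only gets $j(x,z)\le\liminf_n j(x,u_{k_n}(x))$, so the infimum of the merely l.s.c.\ function $j(x,\frarg)$ over the \emph{closed} set $\Gamma(x)$ can a priori be strictly smaller than $\inf_k j(x,u_k(x))$; a concrete failure mode is $j(x,u_{k}(x))=+\infty$ for every $k$ of the realizing sequence while $j(x,\frarg)$ is finite at a limit point of their values. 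Bridging this is precisely the content of the second displayed identity of the theorem (note that its right-hand side is restricted to $\{u\in\mathscr F: J(u)<+\infty\}$), and it requires applying the essential-supremum machinery not to $x\mapsto\{u(x)\}$ alone but to a multifunction adapted to $j$ (e.g.\ the closure of the graphs $x\mapsto\{(u(x),j(x,u(x)))\}$ over the finite-energy subfamily), so that near-optimal values of $j$ on $\Gamma(x)$ are actually attained along members of $\mathscr F$. A second, smaller issue: to control the error on the set where $\alpha_i\ne\ONE_{B_i}$ you need the finitely many $j(\frarg,u_{k(i)})$ to be $\mu$-integrable, not just $m$ and $j(\frarg,u_0)$; this forces you to work inside $\{u\in\mathscr F: J(u)<+\infty\}$ and to check that this subfamily is still inf-stable, which again uses the convexity inequality together with an integrable lower bound for $j$ that a general normal convex integrand with values in $\overline\R$ need not possess without further argument.
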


\section{The dual problem}\label{sec:duality}

Some of the results of this section are stated under weaker assumptions than the ones 
previously established in the introduction; however, the results in subsequent sections do require stronger these properties  (for a discussion on the sharpness of our assumptions on the integrand $f$ we refer the reader to \cite{BouchitteValadier88} and references therein).

In this section we study the dual formulation \eqref{eq:dualproblem} of \eqref{eq:problem} in the duality 
$(\Lrm^\infty,\Lrm^1)$. Our main goal is to prove Theorem~\ref{prop:nogap} which states not only that \eqref{eq:problem} and \eqref{eq:dualproblem} are in duality but that there is no gap between them. 

The idea is to combine the results of the last section to characterize the dual problem \eqref{eq:dualproblem} as an integral functional in $\Lrm^\infty(\Omega;\R^n)$. Once this is achieved, we will turn to the proof of Theorem \ref{prop:nogap}. Our approach relies on Ekeland's Variational Principle which allows us to work {\it asymptotically close} to the constraint $\A u = \tau$. This  {\it extra} flexibility will be essential to prove that the infimum in problem \eqref{eq:problem} and the supremum in problem \eqref{eq:dualproblem} agree.

For an integrand $g:\Omega \times \R^N \to \R$, we will write $I_g$ to denote  the functional that assigns
\[
u \mapsto \int_\Omega g(x,u(x)) \dd x, \quad u \in \Lrm(\Omega;\R^N),
\]
Following standard notation we denote, for a Banach space $X$ and a subset $U \subset X$, the $U$-indicator function $\delta_X( \frarg | U ) : X \to \cl \R$ defined by the functional 
\[
\delta_X( u \, | U ) \coloneqq \begin{cases}
0 & \hfill \text{if $u \in U$} \\
\infty & \text{if $x \in X \setminus U$}
\end{cases},
\]
which is lower semicontinuous on $\| \frarg\|_X$-closed sets $U \subset X$.
Define $J : \Lrm^1(\Omega;\R^N) \times \Lrm^1(\Omega;\R^n) \to \overline \R$ to be the functional given by
\[
J(u,q) := I_f(u) + \delta_{\Lrm^1(\Omega;\R^n)}(\,q \, | \{\tau\}),
\]
and consider the perturbation with respect to $\A$ given by
\[
\Phi(u,p) := \begin{cases}
J(u,\A u - p) & \quad \text{if $u \in \Wrm^{\A,1}(\Omega)$}\\
\infty& \hfil \text{else}
\end{cases}.
\]
Notice that, the minimization problem \eqref{eq:problem} may be re-written as
\[
\text{minimize  $\Phi(\frarg,0)$ in  $\Lrm^1(\Omega;\R^N)$}. \tag{$\mathcal P$}
\]

\begin{lemma}\label{If} Let $f:\Omega \times \R^N \to \overline \R$ be a continuous and convex integrand with linear growth at infinity. Then the Fenchel conjugate of the functional $I_f : \Lrm^1(\Omega;\R^N) \to \R$, 
 is given by the integral functional
\[
u^* \mapsto I_{f^*}(u^*),
\]
defined on functions $u^* \in \Lrm^\infty(\Omega;\R^N)$.

\end{lemma}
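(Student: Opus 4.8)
The plan is to recognize this as the classical Rockafellar-type duality formula for integral functionals and to deduce it from the commutation theorem for inf-stable families (Theorem~\ref{stability}), for which the preliminaries were set up. Writing out the Fenchel conjugate in the $(\Lrm^1,\Lrm^\infty)$ duality, for $u^* \in \Lrm^\infty(\Omega;\R^N)$ one has
\[
(I_f)^*(u^*) = \sup_{u \in \Lrm^1(\Omega;\R^N)} \left\{ \int_\Omega u^*(x)\cdot u(x) \dx - \int_\Omega f(x,u(x)) \dx \right\} = -\inf_{u \in \Lrm^1(\Omega;\R^N)} \int_\Omega j(x,u(x)) \dx,
\]
where $j(x,z) \coloneqq f(x,z) - u^*(x)\cdot z$. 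The point of this rewriting is that $\inf_{z \in \R^N} j(x,z) = -f^*(x,u^*(x))$, so the whole statement reduces to the assertion that the infimum of the integral functional $J(u) = \int_\Omega j(x,u(x))\dx$ equals the integral of the pointwise infimum of its integrand, which is exactly the content of Theorem~\ref{stability}.

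Next I would verify the hypotheses of that theorem for the family $\mathscr F = \Lrm^1(\Omega;\R^N)$, taken with $\mu = \Lcal^d$. First, $j$ is a normal convex integrand: $f$ is a continuous convex integrand by assumption, and subtracting the term $u^*(x)\cdot z$, which is $\Lcal^d$-measurable in $x$ and continuous and affine in $z$, preserves both normality and convexity in the $z$-variable. Second, $\Lrm^1(\Omega;\R^N)$ is inf-stable, since for any continuous partition of unity $(\alpha_i) \subset \Crm(\overline\Omega)$ and any $u_i \in \Lrm^1$ the combination $\sum_i \alpha_i u_i$ again lies in $\Lrm^1$ (the $\alpha_i$ are bounded). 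Third, $J$ is proper within $\mathscr F$: taking $u_0 \equiv 0$ gives $j(x,0) = f(x,0) \in [0,M]$ by the growth bound \eqref{eq:linear}, whence $J(0) = \int_\Omega f(x,0)\dx \in \R$ (using $\Lcal^d(\Omega) < \infty$, implicit in $I_f$ being $\R$-valued). Finally, the essential-supremum multifunction of $\mathscr F$ is $\Gamma(x) = \R^N$ for $\Lcal^d$-a.e. $x$, because $\Lrm^1$ contains functions attaining any prescribed value on any subset of finite positive measure, so no smaller closed-valued measurable multifunction can contain every $u(x)$.

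With these in place, Theorem~\ref{stability} yields
\[
\inf_{u \in \Lrm^1(\Omega;\R^N)} \int_\Omega j(x,u(x)) \dx = \int_\Omega \inf_{z \in \R^N} j(x,z) \dx = -\int_\Omega f^*(x,u^*(x))\dx = -I_{f^*}(u^*),
\]
and substituting into the displayed expression for $(I_f)^*(u^*)$ gives precisely $(I_f)^*(u^*) = I_{f^*}(u^*)$, as claimed.

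I do not expect a genuine conceptual obstacle here, since the analytic heavy lifting is carried by Theorem~\ref{stability}; the delicate point is rather the extended-real bookkeeping, i.e.\ confirming the formula persists when $\|u^*\|_{\Lrm^\infty} > M$. The lower growth bound in \eqref{eq:linear} forces $\setb{z^* \in \R^N}{f^*(x,z^*) < \infty} \subseteq M\cl{\Bbb^N}$, so on the set where $|u^*(x)| > M$ one has $f^*(x,u^*(x)) = +\infty$ and hence $\inf_z j(x,z) = -\infty$ on a set of positive measure; the commutation formula then correctly returns $-\infty$ for the infimum and $+\infty$ for both $(I_f)^*(u^*)$ and $I_{f^*}(u^*)$, so the two sides still agree. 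When $\|u^*\|_{\Lrm^\infty} \le M$, instead, the integrand $-f^*(x,u^*(x)) = \inf_z j(x,z)$ is bounded above by $M$ (since $f^*(x,\frarg) \ge -f(x,0) \ge -M$) and measurable as the conjugate of a normal integrand, so every integral above is well defined in $[-\infty,+\infty)$. As a consistency check, the inequality $(I_f)^* \le I_{f^*}$ follows at once from the pointwise Fenchel--Young inequality $u^*(x)\cdot u(x) - f(x,u(x)) \le f^*(x,u^*(x))$ integrated over $\Omega$; the role of Theorem~\ref{stability} is exactly to show this bound is saturated by an $\Lrm^1$ near-optimal measurable selection.
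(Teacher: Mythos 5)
Your proof is correct and follows essentially the same route as the paper: both reduce $(I_f)^*(u^*)$ to $-\inf_u \int_\Omega j(x,u(x))\,\mathrm{d}x$ with $j(x,z)=f(x,z)-u^*(x)\cdot z$ and invoke Theorem~\ref{stability} for the inf-stable family $\mathscr F=\Lrm^1(\Omega;\R^N)$, noting $\Gamma\equiv\R^N$ and $\inf_z j(x,z)=-f^*(x,u^*(x))$. Your additional bookkeeping (explicit verification of inf-stability, properness, and the extended-real cases when $\|u^*\|_{\Lrm^\infty}>M$) only fills in details the paper leaves implicit.
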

\begin{proof}
We argue as follows.\\
	{\it Step 1.} We point out that $\Lrm^1(\Omega;\R^N)$ is an inf-stable family.\\
	{\it Step 2.} Since $f$ has linear growth, $I_f - \langle u^*, \frarg \rangle$ is proper in $\Lrm^1(\Omega;\R^N)$.\\
	{\it Step 3.} We fix $u^* \in \Lrm^\infty(\Omega;\R^N)$ and apply Theorem \ref{stability} to 
	$\mathscr F = \Lrm^1(\Omega;\R^N)$ (which is an inf-stable family), to $\mu = \dd \mathcal L^d$, and to 
	\[
	j(x,z) = f(x,z)- u^*(x) \cdot z ,\] 
which remains a convex normal integrand, to find out that
	\[
	(I_f)^*(u^*) = -\inf_{u \in \Lrm^1(\Omega;\R^N)} \int_\Omega j(x,u(x)) \dd x  = -\int_\Omega \inf_{z \in \Gamma(x)} j(x,z) \dd x,
	\]
	where 
	$\Gamma(\frarg) =  \esssup\setb{u(\frarg)}{u \in \Lrm^1(\Omega;\R^N)} = \R^N$. Since $\inf_{z \in \R^N} j(x,z)$ is nothing else than
	$-f^*(x,u^*(x))$ for a.e. $x \in \Omega$, it follows that
	\[
	(I_f)^*(u^*) = I_{f^*}(u^*).
	\]
\end{proof}

\begin{lemma}\label{special case}
Assume that $f: \Omega \times \R^N \to \R$ is a continuous and convex integrand with linear growth at infinity. Then,
\[
\Phi^*(u^*,w^*) =  \begin{cases}
I_{f^*}(u^* + \A^* w^*) - \dpr{w^*,\tau}& \text{if $w^* \in D(\A^*)$}\\
\infty & \hfil \text{otherwise}
\end{cases}.
\] 
\end{lemma}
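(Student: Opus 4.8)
The plan is to unfold the definition of the conjugate and reduce the double supremum over $(u,p)$ to a single supremum over $\Lrm^1$ to which Lemma~\ref{If} applies. By definition, in the duality $(X \times Y, X^* \times Y^*)$ with $X = \Lrm^1(\Omega;\R^N)$ and $Y = \Lrm^1(\Omega;\R^n)$, we have
\[
\Phi^*(u^*,w^*) = \sup_{u \in X,\, p \in Y} \big\{ \dpr{u^*,u} + \dpr{w^*,p} - \Phi(u,p)\big\}.
\]
Since $\Phi(u,p)$ is finite only when $u \in \Wrm^{\A,1}(\Omega)$ and $p = \A u - \tau$, in which case it equals $I_f(u)$, the constraint eliminates the $p$-variable, and after factoring out the finite constant $-\dpr{w^*,\tau}$ this leaves
\[
\Phi^*(u^*,w^*) = \sup_{u \in \Wrm^{\A,1}(\Omega)} \big\{ \dpr{u^*,u} + \dpr{w^*,\A u} - I_f(u)\big\} - \dpr{w^*,\tau}.
\]
It remains to evaluate the displayed supremum, and the two cases of the statement correspond exactly to whether or not $w^* \in D(\A^*)$.

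First I would treat the case $w^* \in D(\A^*)$. Here the defining property of $D(\A^*)$ guarantees that $u \mapsto \dpr{w^*,\A u}$ is $\Lrm^1$-bounded on the dense subspace $\Wrm^{\A,1}(\Omega)$ (dense since it contains $\Crm^\infty_c(\Omega;\R^N)$), hence extends to a bounded linear functional on $\Lrm^1$ represented by $\A^* w^* \in \Lrm^\infty(\Omega;\R^N)$ through $\dpr{w^*,\A u} = \dpr{\A^* w^*, u}$. The quantity to be maximized then becomes $u \mapsto \dpr{u^* + \A^* w^*, u} - I_f(u)$, which is continuous on $\Lrm^1$: the linear part because $u^* + \A^* w^* \in \Lrm^\infty$, and $I_f$ because $f(x,\frarg)$ is $M$-Lipschitz uniformly in $x$ (as established in the proof of Lemma~\ref{rem:rec1}), whence $|I_f(u) - I_f(v)| \le M\|u - v\|_{\Lrm^1}$. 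By density together with this continuity, the supremum over $\Wrm^{\A,1}(\Omega)$ coincides with the supremum over all of $\Lrm^1$, which is precisely $(I_f)^*(u^* + \A^* w^*)$. Applying Lemma~\ref{If} identifies this with $I_{f^*}(u^* + \A^* w^*)$, yielding the asserted formula.

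The remaining case $w^* \notin D(\A^*)$ is where the argument is most delicate, and I expect it to be the main obstacle. Negating the defining inequality shows that $u \mapsto \dpr{w^*,\A u}$ is unbounded on the unit ball of $\Wrm^{\A,1}(\Omega)$; after replacing $u$ by $-u$ if necessary, one obtains $u_0 \in \Wrm^{\A,1}(\Omega)$ with $\|u_0\|_{\Lrm^1} \le 1$ and $\dpr{w^*,\A u_0} > \|u^*\|_{\Lrm^\infty} + M$. Testing the supremum along the ray $t u_0$ as $t \to \infty$, and using the growth bound $I_f(t u_0) \le M(\Lcal^d(\Omega) + t\|u_0\|_{\Lrm^1})$ from \eqref{eq:linear} together with $\dpr{u^*, t u_0} \ge -t\|u^*\|_{\Lrm^\infty}$, the bracketed expression is bounded below by $t\big(\dpr{w^*,\A u_0} - \|u^*\|_{\Lrm^\infty} - M\big) - M\Lcal^d(\Omega)$, whose coefficient of $t$ is strictly positive, so it diverges to $+\infty$. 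Hence $\Phi^*(u^*,w^*) = +\infty$, completing the dichotomy. The two genuinely nontrivial points — the density-plus-continuity extension of the supremum from $\Wrm^{\A,1}(\Omega)$ to $\Lrm^1$, and the construction of a diverging test ray in the complementary case — are what the proof really turns on; the rest is bookkeeping.
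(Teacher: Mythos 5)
Your proof is correct and follows essentially the same route as the paper: unfold the conjugate to a supremum over $\Wrm^{\A,1}(\Omega)$, show divergence on the unit ball when $w^* \notin D(\A^*)$, and otherwise rewrite $\dpr{w^*,\A u}$ as $\dpr{\A^* w^*,u}$ and invoke Lemma~\ref{If}. The only substantive difference is that you make explicit the density-plus-Lipschitz-continuity step needed to replace the supremum over $\Wrm^{\A,1}(\Omega)$ by one over all of $\Lrm^1$, a point the paper passes over in silence.
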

\begin{proof} By definition
\begin{equation}\label{eq:diosmio}
\begin{split}
\Phi^*(u^*,w^*) & = \sup_{\substack{u \in \Lrm^1(\Omega;\R^N)\\p\in \Lrm^1(\Omega;\R^n)}} \Big\{ \dpr{u^*,u} + \dpr{w^*,p} - I_f(u) - \delta_{\Lrm^1(\Omega;\R^n)}(\A u - p \, | \{\tau\})\Big\} \\
& = \sup_{\substack{u \in \Wrm^{\A,1}(\Omega)}} \Big\{ \dpr{u^*,u} + \dpr{w^*,\A u - \tau} - I_f(u)\Big\}.
\end{split}
\end{equation}
Taking the supremum over all $u \in \Wrm^{1,\A}(\Omega)$ with $\|u\|_{\Lrm^1} \le 1$, the inequality above yields
\[
\Phi(u^*,w^*) \ge -\|u^*\|_{\Lrm^\infty} + \sup_{\substack{u \in \Wrm^{\A,1}(\Omega)\\\|u \|_{\Lrm^1} \le 1}} |\dpr{w^*,\A u}|  -\dpr{w^*,\tau}.
\]
Hence,
\[
\Phi^*(u^*,w^*) = \infty \quad \text{if $w^* \notin D(\A^*)$}.
\]
This shows the second part of the assertion.

If $w^* \in D(\A^*)$, we may use that $\dpr{w^*,\A u } = \dpr{\A^*w^*,u}$ in the last line of \eqref{eq:diosmio} to get that
\[
\Phi^*(u^*,w^*) = (I_f)^*(u^* + \A^* w^*) - \dpr{w^*,\tau}.
\]
The first part of the sought assertion is then an immediate consequence of  Lemma \ref{If}.
\end{proof}

\begin{corollary}\label{cor:nogap}The dual problem of \eqref{eq:problem} 
reads:
\begin{equation}
\tag{$\mathcal P^*$} \text{maximize} \; \mathcal R \; \text{in the space} \; \Lrm^\infty(\Omega;\R^n),
\end{equation}
where $\mathcal R: \Lrm^\infty(\Omega;\R^n) \to \cl \R$ is the functional defined as 
\[\mathcal R[w^*] := -\Phi^*(0,w^*) = \begin{cases}
\dpr{w^*,\tau}- I_{f^*}(\A^* w^*) & \text{if $w^* \in D(\A^*)$}\\
-\infty & \hfil \text{otherwise}
\end{cases}.
\] 
\end{corollary}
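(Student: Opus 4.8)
The plan is to obtain the statement by specializing the abstract duality scheme of the preceding subsection to the concrete perturbation $\Phi$ and then reading off the objective functional from the formula for $\Phi^*$ established in Lemma~\ref{special case}. First I would recall that, by construction, $\Phi : \Lrm^1(\Omega;\R^N) \times \Lrm^1(\Omega;\R^n) \to \cl\R$, so in the notation of the duality subsection one has $X = \Lrm^1(\Omega;\R^N)$ and $Y = \Lrm^1(\Omega;\R^n)$. Consequently the perturbation variable $p$ ranges over $Y = \Lrm^1(\Omega;\R^n)$ and the dual variable over $Y^* = \Lrm^\infty(\Omega;\R^n)$, which is precisely the space in which the maximization is claimed to take place.

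By the abstract definition \eqref{eq:dualpre} of the dual problem associated with a perturbation, and since \eqref{eq:problem} is precisely the minimization of $\Phi(\frarg,0)$ over $\Lrm^1(\Omega;\R^N)$, the dual of \eqref{eq:problem} is to maximize the map $w^* \mapsto -\Phi^*(0,w^*)$ over $w^* \in \Lrm^\infty(\Omega;\R^n)$. It therefore only remains to evaluate $\Phi^*(0,w^*)$, and this is exactly Lemma~\ref{special case} with $u^* = 0$: substituting $u^* = 0$ there gives
\[
\Phi^*(0,w^*) = \begin{cases} I_{f^*}(\A^* w^*) - \dpr{w^*,\tau} & \text{if } w^* \in D(\A^*) \\ \infty & \text{otherwise} \end{cases}.
\]
Negating yields $\mathcal R[w^*] = -\Phi^*(0,w^*)$ in the asserted form, and the corollary follows.

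In other words, the corollary is a bookkeeping specialization and carries no independent difficulty: all the analytic content has already been expended upstream. The genuinely nontrivial step is the identification $(I_f)^* = I_{f^*}$ of Lemma~\ref{If}, which rests on the Bouchitt\'e--Valadier interchange theorem (Theorem~\ref{stability}) for the inf-stable family $\mathscr F = \Lrm^1(\Omega;\R^N)$ together with the pointwise identity $\inf_{z \in \R^N}\{f(x,z) - u^*(x)\cdot z\} = -f^*(x,u^*(x))$; and the passage from $(I_f)^*$ to $\Phi^*$ in Lemma~\ref{special case}, where the dichotomy $w^* \in D(\A^*)$ versus $w^* \notin D(\A^*)$ is produced by testing the supremum in the definition of $\Phi^*$ against the unit ball of $\Wrm^{\A,1}(\Omega)$ and invoking the duality $\dpr{w^*,\A u} = \dpr{\A^* w^*, u}$. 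If I wanted to flag any single point requiring care it would be that domain dichotomy, but since it is already settled in Lemma~\ref{special case}, nothing further is needed here.
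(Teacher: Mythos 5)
Your proposal is correct and coincides with the paper's treatment: the corollary is stated without proof precisely because it is the immediate specialization $u^*=0$ of Lemma~\ref{special case} combined with the abstract definition \eqref{eq:dualpre} of the dual problem in the duality $Y = \Lrm^1(\Omega;\R^n)$, $Y^* = \Lrm^\infty(\Omega;\R^n)$. Your identification of where the real analytic work lives (Lemmas~\ref{If} and~\ref{special case}) is also accurate.
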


\begin{remark}The Lagrangian associated to the perturbation $\Phi$ is the functional $L : \Lrm^1(\Omega;\R^N) \times \Lrm^\infty(\Omega;\R^n) \to \cl \R$ given by
\[
L(u,w^*) \coloneqq I_f(u) - \dpr{w^*,\A u - \tau}.
\]
Clearly 
\[
\inf_{u \in \Lrm^1(\Omega;\R^N)} L(u,w^*) = \Rcal[w^*],
\]
and
\[
\sup_{w^* \in \Lrm^\infty(\Omega;\R^n)} L(u,w^*) = \Fcal[u].
\]
\end{remark}
Recall that, since $f$ is convex in the $z$ variable for every $x \in \Omega$, using the definition of Fenchel conjugate one obtains
\begin{align*} \mathcal F[u] & = J(u) = I_f(u) \\
& \ge \langle u,v^* \rangle  - I_{f^*}(v^*), \quad \text{for all $u \in u_0 + \Ker \A$, and every $v^* \in \Lrm^\infty(\Omega;\R^N)$}.\end{align*}
In particular, if we set $v^* = \A^* w$ with $w^* \in \Lrm^\infty(\Omega;\R^n) \cap D(\A^*)$, we get 
\[
\mathcal F[u] \ge \langle \A^* w^*, u_0 \rangle - I_{f^*}(\A w^*) = \dpr{w^*,\tau} - I_{f^*}(\A^* w^*) = \mathcal R[w^*].
\]
Hence,
\begin{equation}\label{infsup}
\inf_{u_0 + \ker \A} \mathcal F[u] \ge \sup_{\Lrm^\infty(\Omega;\R^n)} \mathcal R[w^*].
\end{equation}
So far we have not made use of the fact that $\im \A$ is a closed subspace of $\Lrm^1(\Omega;\R^N)$; however, it is precisely under this assumption that the infimum and the supremum coincide:

\begin{proof}[Proof of Theorem \ref{prop:nogap}] That problem \eqref{eq:problem} and \eqref{eq:dualproblem} are dual to each other is a consequence of Corollary \ref{cor:nogap}.

To show that the equality in \eqref{infsup} holds we argue as follows: \\
\proofstep{Step~1. Case reduction.} It is enough to look at integrands $f: \Omega \times \R^N \to \R$ with the property that
$\partial_z f$ exists for every $z \in \R^N$. Indeed, as shown in the proof of Lemma \ref{rem:rec1}, $f$ is $x$-uniformly Lipschitz in its second argument with Lipschitz constant $M$. Therefore, 
\[
|f^\delta(x,z) - f(x,z)|\le M\delta, \quad \forall \; x \in \Omega, z \in \R^N,
\] 
where the notation $f^\delta$ stands for the function 
\[
f^\delta(x,z) := (f(x,\frarg) * \rho_\delta)(z) = \int_{\R^N} f(x,y) \rho_\delta(z-y) \, \text{d} y,
\]
and where $\rho_\delta$ is a standard (smooth and even function) mollifier at scale $\delta$. It is not hard to see that the mollified $f^\delta \ge f$ are again continuous and convex integrands, and the limits: $f^\delta \downarrow f$ and $(f^\delta)^* \uparrow f^*$ hold uniformly in $\R^N$ for every $x \in \Omega$.
 Hence, thanks to \eqref{infsup} and to the monotone convergence theorem it will be sufficient to show that 
\begin{equation}\label{delta}
\inf_{u_0 + \ker \A} \mathcal F_\delta \le \sup_{\Lrm^\infty(\Omega;\R^n)} \mathcal R_{\delta}, \quad \text{for all $\delta > 0$}.
\end{equation}
where $\mathcal F_\delta$ and $\mathcal R_\delta$ are defined as $\mathcal F$ and $\mathcal R$ with $f_\delta$ and $f^*_\delta$ respectively. \\

\noindent \proofstep{Step~2. Approximative solutions.} In order to prove this inequality we will make use of the following result on approximative solutions that uses Ekeland's Variational Principle (cf. \cite[Proposition 4.2]{BeckSchmidt15}). 
\begin{lemma}
	Let $g:\Omega \times \R^N \to \R$ be a normal convex integrand with linear growth at infinity and assume that $\partial_z g(x,z)$ exists for every $(x,z) \in \Omega \times \R^N$. Let $\varepsilon$ be a positive constant and let $\overline u \in u_0 +  \ker \A$ be such that 
	\[
 I_g (\overline u) \le \inf_{u_0 + \ker \A} I_g + \varepsilon.
	\]
	Then there exist functions $\hat u \in u_0 +  \ker \A$ and $v^* \in \Lrm^\infty(\Omega;\R^N)$ with the following properties:
	\begin{gather*}
	I_g(\hat u) < \inf_{u_0 + \ker \A} I_g  + 2\varepsilon,\\
	\| \hat u - \overline u \|_{\Lrm^1}\le \sqrt{\varepsilon},\\
	v^*(x) = \partial_z g(x,\hat u(x)) \quad \textnormal{for a.e. $x \in \Omega$},\\
	\langle v^*, \eta \rangle < \sqrt{\varepsilon}\|\eta\|_{\Lrm^1} \quad \forall \; \eta \in \ker \A.
	\end{gather*}
\end{lemma}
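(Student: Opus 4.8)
The plan is to apply Ekeland's Variational Principle directly to $I_g$ restricted to the affine space $u_0 + \ker\A$, and then to read off the multiplier $v^*$ from the resulting approximate Euler--Lagrange relation using the differentiability of $g(x,\frarg)$. First I would check that the hypotheses of Ekeland's principle are met. The set $u_0 + \ker\A$ is a complete metric space for the $\Lrm^1$-distance: the condition ``$\A\mu = 0$ in the sense of distributions'' is stable under $\Lrm^1$-convergence, so $\ker\A$ is an $\Lrm^1$-closed subspace and the affine space is $\Lrm^1$-closed, hence complete. Moreover $I_g$ is real-valued and $\Lrm^1$-continuous (by the linear growth of $g$ it is finite everywhere and, being convex with bounded gradient, globally Lipschitz in $\Lrm^1$), and it is bounded below on $u_0 + \ker\A$ since its infimum there is assumed finite.

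Next I would apply the principle to the almost-minimizer $\overline u$ with scaling parameter $\lambda = \sqrt{\varepsilon}$. This produces a point $\hat u \in u_0 + \ker\A$ with $I_g(\hat u) \le I_g(\overline u) < \inf_{u_0+\ker\A} I_g + 2\varepsilon$ and $\|\hat u - \overline u\|_{\Lrm^1}\le\sqrt{\varepsilon}$, which are the first two assertions; note that the constant $\varepsilon/\lambda = \sqrt{\varepsilon}$ produced by the principle is exactly the one in the final estimate. The decisive output is that $\hat u$ is a \emph{strict} minimizer of the perturbed functional $w \mapsto I_g(w) + \sqrt{\varepsilon}\,\|w - \hat u\|_{\Lrm^1}$ over $u_0 + \ker\A$. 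I would then set $v^*(x) := \partial_z g(x,\hat u(x))$. Since $g(x,\frarg)$ is convex with linear growth, its gradient is bounded by $M$ (exactly as in the proof of Lemma~\ref{rem:rec1}), so $v^* \in \Lrm^\infty(\Omega;\R^N)$ with $\|v^*\|_{\Lrm^\infty}\le M$; measurability of $v^*$ follows because $\partial_z g$ is a Carath\'eodory map composed with the measurable $\hat u$. This settles the third assertion.

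For the final variational inequality I would avoid any delicate passage to the limit in difference quotients and argue by convexity instead. For every nonzero $\eta \in \ker\A$ the point $w = \hat u - \eta$ lies in $u_0 + \ker\A$ and differs from $\hat u$, so the strict Ekeland inequality gives $I_g(\hat u) < I_g(\hat u - \eta) + \sqrt{\varepsilon}\,\|\eta\|_{\Lrm^1}$. On the other hand the pointwise subgradient inequality $g(x,\hat u(x)-\eta(x)) \ge g(x,\hat u(x)) - \partial_z g(x,\hat u(x))\cdot\eta(x)$, integrated over $\Omega$, yields $I_g(\hat u - \eta) \ge I_g(\hat u) - \langle v^*,\eta\rangle$. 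Substituting and cancelling $I_g(\hat u)$ gives $\langle v^*,\eta\rangle < \sqrt{\varepsilon}\,\|\eta\|_{\Lrm^1}$; since $\ker\A$ is a subspace, testing with $-\eta$ recovers the two-sided bound as well.

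I expect the genuinely delicate points to be the bookkeeping ones rather than the core estimate: confirming that $u_0 + \ker\A$ is $\Lrm^1$-complete and that $I_g$ is bounded below there (so that Ekeland applies), and verifying that $v^*$ is a bona fide element of the dual space $\Lrm^\infty(\Omega;\R^N)$, so that the pairing $\langle v^*,\eta\rangle$ is well defined and its scaling matches that of the $\Lrm^1$-perturbation. The conceptual heart of the proof is the last step, where evaluating the strict Ekeland inequality at $\hat u - \eta$ (rather than $\hat u + \eta$) and combining it with the subgradient inequality converts Ekeland's metric perturbation into the sought strict, sign-definite bound on the multiplier.
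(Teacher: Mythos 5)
Your setup is the same as the paper's: apply Ekeland's Variational Principle to $I_g$ on the $\Lrm^1$-complete affine space $u_0+\ker\A$ with the rescaled metric, obtain $\hat u$ with $I_g(\hat u)<\inf I_g+2\varepsilon$, $\|\hat u-\overline u\|_{\Lrm^1}\le\sqrt{\varepsilon}$, and the perturbed minimality $I_g(\hat u)< I_g(u)+\sqrt{\varepsilon}\|\hat u-u\|_{\Lrm^1}$, and set $v^*=\partial_z g(\frarg,\hat u(\frarg))\in\Lrm^\infty$ via the uniform Lipschitz bound. All of that is correct and matches the paper.

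The last step, however, has a genuine gap: the two inequalities you combine point in incompatible directions. The subgradient inequality at $\hat u$ evaluated at $w=\hat u-\eta$ gives $g(x,\hat u(x)-\eta(x))\ge g(x,\hat u(x))-v^*(x)\cdot\eta(x)$, which after integration reads $\langle v^*,\eta\rangle\ge I_g(\hat u)-I_g(\hat u-\eta)$: a \emph{lower} bound on $\langle v^*,\eta\rangle$. Ekeland's inequality with $u=\hat u-\eta$ gives $I_g(\hat u)-I_g(\hat u-\eta)<\sqrt{\varepsilon}\|\eta\|_{\Lrm^1}$: an \emph{upper} bound on that same difference. Chaining them yields only ``$\langle v^*,\eta\rangle$ is bounded below by a quantity that is less than $\sqrt{\varepsilon}\|\eta\|_{\Lrm^1}$,'' which says nothing about an upper bound for $\langle v^*,\eta\rangle$; the discrepancy is the Bregman gap $I_g(\hat u-\eta)-I_g(\hat u)+\langle v^*,\eta\rangle\ge 0$, which can be arbitrarily large. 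Testing with $\hat u+\eta$ instead runs into the mirror-image problem. The passage to the limit in difference quotients that you wanted to avoid is in fact unavoidable and is exactly what the paper does: take $u=\hat u-s\eta$, so that
\[
-\int_\Omega\frac{g(x,\hat u(x)-s\eta(x))-g(x,\hat u(x))}{s}\dd x\le\sqrt{\varepsilon}\|\eta\|_{\Lrm^1},
\]
and let $s\downarrow 0$. By convexity the integrands increase monotonically to $v^*\cdot\eta$ as $s\downarrow 0$ (and are dominated in modulus by $M|\eta|$), so Fatou/monotone convergence passes the limit inside the integral and yields $\langle v^*,\eta\rangle\le\sqrt{\varepsilon}\|\eta\|_{\Lrm^1}$. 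The point is that $\langle v^*,\eta\rangle$ is the \emph{supremum} over $s>0$ of the quantities $\frac{1}{s}\bigl(I_g(\hat u)-I_g(\hat u-s\eta)\bigr)$ that Ekeland controls, so only the limit $s\downarrow 0$, not the single value $s=1$, delivers the bound.
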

\begin{proof}
	Regard $I_g$ as a continuous functional $I_g: (u_0 + \ker \A,{\varepsilon}^{-1/2}\| \cdot \|_{\Lrm^1}) \to \R$. It follows from  the growth conditions of $g$ that $I_g$ is well defined. Also, an application of
	Ekeland's Variational Principle tells us that there must exist $\hat u \in 
	u_0 + \ker \A$ such that
	\begin{gather*}
	\| \hat u - \overline u \|_{\Lrm^1}\le \sqrt{\varepsilon},\\
	 I_g(\hat u) < I_g(u) +\sqrt{\varepsilon}\|\hat u - u\|_{\Lrm^1} \quad \forall~u \in u_0 + \ker \A.
	\end{gather*}
	In particular, taking $u = \overline u$ it follows that
	\[
	I_g (\hat u) <I_g(\cl u) + \varepsilon \le 
	\inf_{u_0 + \ker \A} I_g + 2\varepsilon,
	\]
	this proves the first estimate.
	If this time we take $u = \hat u - s \eta$, for every given  $\eta \in \ker \A$ we get
	\[
	-\int_{\Omega }\frac{g(x,\hat u(x) - s\eta(x)) - g(x,\hat u(x))}{s} \le \sqrt{\varepsilon}\|\eta\|_{\Lrm^1},\quad \eta \in \ker \A.
	\]
	Taking the limit as $s \downarrow 0$ and using the fact that $\partial_z g(x,z)$ exists for every $(x,z) \in \Omega \times \R^N$, by Fatou's Lemma we get
	\begin{equation*}
	\langle \partial_z g(\frarg,\hat u) , \eta \rangle \le \sqrt{\varepsilon}\|\eta\|_{\Lrm^1},\quad \forall \; \eta \in \ker \A.
	\end{equation*}
	Notice that $v^* := \partial_z g(\frarg,\hat u(\frarg)) \in \Lrm^\infty(\Omega;\R^N)$ (which follows from the uniform Lipschitz bound for $g$, as it is convex and possess a uniform linear growth at infinity). This proves  the lemma.
\end{proof}
Let us go back to the proof of \eqref{delta}. Fix $\delta > 0$ and let  $(u_\varepsilon)\subset u_0 + \ker \A$ be an $\varepsilon$-minimizing sequence for $\mathcal F_{\delta}$, i.e., 
\[
\mathcal F_{\delta}[u_\varepsilon] < \inf_{u_0 + \ker \A} \mathcal F_{\delta} + \varepsilon.
\] 
It is clear that $f^\delta$ fits the requirements of the lemma above and hence we may obtain an $\Lrm^\infty$-bounded sequence $\{v_\varepsilon\} \subset \Lrm^\infty(\Omega;\R^N)$ with the following properties:
\begin{equation}\label{casi1}
(f^\delta(x,\cdot))^*(u_\varepsilon(x)) + (f^\delta(x,\cdot))^*(v_\varepsilon(x)) = u_\varepsilon(x) \cdot v_\varepsilon(x), \quad \text{for every $x \in \Omega$},
\end{equation}
and
\begin{equation}\label{casi2}
\langle v_\varepsilon, \eta \rangle \le \sqrt{\varepsilon} \|\eta\|_{\Lrm^1}, \quad \text{for every $\eta \in \ker \A$}.
\end{equation}
From the uniform boundedness of $v_\varepsilon$, we may extract a subsequence (which will not be relabeled) to find a function $v \in \Lrm^\infty(\Omega;\R^N)$ such that
$v_\varepsilon \toweakstar v$. Observe that since $\range \A$ is closed for the $\Lrm^1$-strong topology it must hold that
$\im \A^*$ is closed for the $\Lrm^\infty$ topology (see \cite[Theorem2.19]{brezis1999analyse}). Hence, $(\ker \A)^\perp = \text{Im} \A^*$ and from \eqref{casi2} we then get  that $v \in (\ker \A)^\perp = \range\A^*$. This characterization yields the existence of $w^* \in D(\A^*)$ with $v = \A^* w^*$.
It follows from the convexity of $(f^\delta)^*(x,\frarg)$ at every $x \in \Omega$, and the fact that $(f^\delta)^* : \Omega \times \R^N \to \overline \R$ is again a normal integrand bounded from below (see for example \cite[Chapter VIII]{EkelandTemamBook}) that the map $\eta \mapsto \int_{\Omega } -(f^\delta)^*(x,\eta(x)) \,\text{d}x$ is $\Lrm^\infty$-weakly* upper semicontinuous. With this and \eqref{casi1} in mind, one easily verifies that
\begin{align*}
\mathcal R_{\delta}[w^*] & :=  -\int_\Omega (f^\delta)^*(x,\A^* w^*(x)) \, \text{d}x \\
& \ge \limsup_{\varepsilon \downarrow 0}  \left\{-\int_\Omega (f^\delta)^*(x,v_\varepsilon(x)) \, \text{d}x\right\} \\
& =  \limsup_{\varepsilon \downarrow 0}   \left\{\int_{\Omega} f^\delta(x,u_\varepsilon(x)) \,\text{d}x - \langle v_\varepsilon, u_\varepsilon \rangle \right\}\\
& \ge  \lim_{\varepsilon \downarrow 0} \left\{  \Fcal_\delta[u_\eps] - \sqrt{\varepsilon} \|u_\varepsilon\|_{\Lrm^1}\right\} = \inf_{u_0 + \ker \A} \mathcal F_\delta,
\end{align*}
where in the last step we have used the coercivity of $f$ to guarantee that any minimizing sequence for $\mathcal F$ is $\Lrm^1$-uniformly bounded. This proves 
\[
\inf_{u_0 + \ker \A} \mathcal F_\delta \le \sup_{\Lrm^\infty(\Omega;\R^n)} \mathcal R_\delta.
\]
Since our choice of $\delta$ was arbitrary, this proves \eqref{delta} which in turn gives
\begin{equation}\label{finite}
\inf_{u_0 + \ker \A} \mathcal F = \sup_{\Lrm^\infty(\Omega;\R^n)} \mathcal R.
\end{equation}

To prove that there exists a solution $w^* \in \Lrm^\infty(\Omega;\R^n)$ of problem \eqref{eq:dualproblem} we observe the following facts: the set  inclusion $\set{z\in \R^N}{f^*(x,z) < + \infty} \subset M\cdot\Bbb^N$ holds 
for every $x \in \Omega$, whereby the infimum -- and hence also the supremum in \eqref{finite} -- is finite. 
We may then extract a maximizing sequence $(w_j^*) \subset D(\A^*)$ with the property that
\[
\sup_{j \in N} \|\A^* w_j^* \|_{\Lrm^\infty} \le M < \infty.
\]
The conclusion follows by the direct method: up to taking a subsequence, we may assume that $w_j^*$ weakly* converges to some $w^* \in \Lrm^\infty(\Omega;\R^N)$. Since $\im \A^*$ is closed in $\Lrm^\infty(\Omega;\R^N)$ with respect to the weak* topology, we may further find $v^* \in \Lrm^\infty(\Omega;\R^n) \cap D(\A^*)$ such that $\A^* v^* = w^*$. The conclusion is then an immediate consequence of  the sequential $\Lrm^\infty$-weakly* l.s.c.\footnote{For a normal integrand $f : \Omega \times \R^N \to \overline \R$ with linear growth at infinity, its conjugate $f^*:\Omega\times \R^N \to \overline \R$ is again a normal integrand bounded from below.} of 
\[\eta \mapsto -\int_{\Omega} f^* (x,\eta(x)) \dd x.\]

\end{proof}

\begin{remark}[Assumptions I]\label{rem:ass1} The results in this section hold for any partial differential operator  $\A$ with the property (H1); as it can be observed, property (H2) has not yet been employed in our proofs. Therefore, one can think of (H1) as a technical assumption that allows one to use convex duality methods.
\end{remark}

\section{The relaxed problem}\label{sec:relaxation}

So far we have not discussed the optimality conditions for problem \eqref{eq:problem}. In part, this owes to the fact that \eqref{eq:problem} may not necessarily be well-posed. More precisely, due to the lack of compactness of $\Lrm^1$-bounded sets one must look into
the so-called relaxation of the energy $\mathcal F$. The latter has a meaning by extending the basis space to a subspace of the bounded 
vector-valued Radon measures $\M(\Omega;\R^N)$. It is well known that the largest (below $\mathcal F$) lower semicontinuous functional with respect to the weak*-convergence of measures is given by
\[
\overline{\mathcal F}[\mu] := \inf \left\{ \liminf \mathcal F[u_j] : u_j\toweakstar \mu, u_j \in u_0 +  \ker \A \right\}.
\]
Under assumption (H2) it is easy to see that $\overline{\mathcal F}$ is again an integral functional:

\begin{proof}[Proof of Theorem \ref{prop:relax}]Let $\mu \in u_0 + \ker_{\M} \A$, we recall that 
	\[
	\overline{\mathcal F}[\mu] := \inf \setB{\liminf_{j \to \infty}\mathcal F[u_j]}{u_j \in u_0 +  \ker \A, u_j\Lcal^d \toweakstar \mu}.
	\]
	
	We divide the proof in three parts: \\
	\proofstep{1.~Lower bound.} Let $(u_j)$ be a sequence in $u_0 +  \ker \A$ with the property that 
	\[
	u_j \Lcal^d \toweakstar \mu, \qquad \text{in $\M(\Omega;\R^N)$},
	\]
	we want to show that
	\[
	\liminf_{j \to \infty}\, \mathcal F[u_j] \ge \int_{\Omega} f\left(x,\frac{\dd \mu}{\dd \Lcal^d}(x)\right) \, \textnormal{d}x + \int_{\Omega} f^\infty\left(x,\frac{\dd \mu^s}{\dd |\mu^s|}(x)\right) \, \textnormal{d}|\mu^s|(x).
	\]
	The latter is a consequence of Lemma \ref{rem:rec1} and Theorem \ref{thm:alibertlsc} (and the fact that $f \ge 0$)

	\proofstep{2.~Upper bound.}  We show that there exists a sequence $(u_j) \subset u_0 +  \ker \A$ with $u_j \Lcal^d \toweakstar \mu$ and such that
	\[
		\limsup_{j \to \infty} \Fcal[u_j]	\le  \int_{\Omega} f\left(x,\frac{\dd \mu}{\dd \Lcal^d}(x)\right) \, \textnormal{d}x + \int_{\Omega} f^\infty\left(x,\frac{\dd \mu^s}{\dd |\mu^s|}(x)\right) \, \textnormal{d}|\mu^s|(x).
		\]

This time we will make use of (H2) and Theorem \ref{thm:resch}: Let $(u_j) \subset u_0 +  \ker \A$ be a sequence that area-strict converges to $u_0 + \mu \in \ker_\M \A$.
	
	A direct consequence of Theorem \ref{thm:resch} is that
		\[
		\limsup_{j \to \infty}\, \mathcal F[u_j] = \int_{\Omega} f\left(x,\frac{\dd \mu}{\dd \Lcal^d}(x)\right) \, \textnormal{d}x + \int_{\Omega} f^\infty\left(x,\frac{\dd \mu^s}{\dd |\mu^s|}(x)\right) \, \textnormal{d}|\mu^s|(x),
		\]
		which is the sought assertion.
		
\proofstep{3.~Conclusion.} A combination of the lower and upper bounds yields that
\[
\cl\Fcal[\mu] = \int_{\Omega} f\left(x,\frac{\dd \mu}{\dd \Lcal^d}(x)\right) \, \textnormal{d}x + \int_{\Omega} f^\infty\left(x,\frac{\dd \mu^s}{\dd |\mu^s|}(x)\right) \, \textnormal{d}|\mu^s|(x),
\]
for all $\mu \in u_0 + \ker_\M \A$.
\end{proof}

\begin{remark}
The direct method can be applied to derive the existence of solutions to \eqref{eq:rproblem}. This follows from the sequential weakly* lower semicontinuity of $\cl \Fcal$ in $u_0 + \ker_\M \A$ and the coerciveness of $f$.  
\end{remark}

\section{The pairing $\llbracket \mu , \A^*u \rrbracket$ and the optimality conditions}\label{sec:pair}

The pointwise product $(\mu \cdot v^*)$ of two functions, $\mu \in u_0 + \ker \A$ and $v^* \in \Lrm^\infty(\Omega;\R^N)$, may be regarded as a bounded Radon measure through the measure  that takes the values
\[
\langle \mu,v^*\rangle (B) := \int_{B\cap \Omega} \mu(x) \cdot v^*(x) \dd x, \quad \text{$B \subset \R^N$ Borel set}.
\]
In general, if $\mu \in u_0 + \ker_\M \A$ is only assumed to be vector-valued Radon measure, one cannot simply give a notion to the inner product of $\mu$ and $v^*$ (even in the sense of distributions). However, following the interests of our minimization problem, one may define the following generalized pairing by setting
\begin{align*}
\llbracket \mu, v^*\rrbracket  :=  \setB{&\lambda \in \M(\Omega)}{ \exists \; (u_j) \subset u_0 +  \ker \A \; \text{such that} \\
& (u_j\cdot v^*)\mathcal L^d  \toweakstar \lambda \; \text{and $(u_j\Lcal^d)$  area-strict converges to $\mu$}}
\end{align*}
In this way, the set $\llbracket \mu, \A^* w^*\rrbracket$ contains information on the concentration effects of sequences of the form $(u_j \cdot \A^* w^*)$. 

The next lines are dedicated to derive the basic properties $\llbracket \mu, \A^* w^*\rrbracket$.

\begin{theorem}\label{optimal}
	Let $\mu \in u_0 + \ker_\M \A$ and let $w^* \in D(\A^*)$. Then 
		\[
	|\lambda|(\omega) \le |\mu|(\omega) \|\A^* w^*\|_\infty(\omega) \quad \text{for every Borel set $\omega \subset \Omega$},
	\]
	for all $\lambda \in \llbracket \mu,\A^*w^*\rrbracket$.
\end{theorem}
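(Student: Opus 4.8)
The plan is to pass to the limit, along a generating sequence $(u_j) \subset u_0 + \ker \A$ with $u_j \Lcal^d \to \mu$ area-strictly and $(u_j \cdot \A^* w^*)\Lcal^d \toweakstar \lambda$, in the elementary pointwise inequality $|u_j \cdot \A^* w^*| \le \|\A^* w^*\|_{\Lrm^\infty(\omega)}\,|u_j|$, valid for $\Lcal^d$-a.e.\ $x \in \omega$ because $\A^* w^* \in \Lrm^\infty(\Omega;\R^N)$. The only delicate point is that this forces me to bound the localized mass $\int_\omega |u_j| \dd x$ \emph{from above}; for that the mere weak* convergence $u_j \Lcal^d \toweakstar \mu$ is useless (it only yields lower semicontinuity of the mass, i.e.\ the wrong direction), and it is precisely here that the full strength of area-strict convergence must be exploited. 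I would first prove the estimate for open sets $\omega$ and then recover arbitrary Borel sets by outer regularity.

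The key preliminary step is to upgrade area-strict convergence to weak* convergence of the total variations, $|u_j|\,\Lcal^d \toweakstar |\mu|$ in $\M(\Omega)$. To obtain this I would apply the generalized Reschetnyak continuity theorem (Theorem~\ref{thm:resch}) to the integrand $f_\psi(x,z) := \psi(x)|z|$ with $\psi \in \Crm_b(\Omega)$. One checks at once that $f_\psi \in \E(\Omega;\R^N)$ (its $S$-transform is $\psi(x)|z|$, which extends continuously to $\Omega \times \cl{\Bbb^N}$) and that $f_\psi^\infty(x,z) = \psi(x)|z|$; hence Theorem~\ref{thm:resch} gives $\int_\Omega \psi\,|u_j| \dd x \to \int_\Omega \psi\, |d\mu/d\Lcal^d|\dd x + \int_\Omega \psi\,\dd|\mu^s| = \int_\Omega \psi\,\dd|\mu|$. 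Since $\psi \in \Crm_b(\Omega)$ is arbitrary, this is exactly the claimed convergence $|u_j|\,\Lcal^d \toweakstar |\mu|$.

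With this in hand the open-set estimate is immediate. Fix an open set $\omega \subset \Omega$ and abbreviate $c_\omega := \|\A^* w^*\|_{\Lrm^\infty(\omega)}$. For any $\phi \in \Crm_c(\omega)$ with $|\phi| \le 1$, combining $(u_j \cdot \A^* w^*)\Lcal^d \toweakstar \lambda$, the pointwise bound, and the previous step (legitimate since $\phi$ is compactly supported inside $\omega$, where the pointwise inequality holds) gives
\begin{align*}
\Big|\int_\omega \phi \dd\lambda\Big|
&= \lim_{j\to\infty}\Big|\int_\omega \phi\,(u_j\cdot\A^* w^*)\dd x\Big|
\le c_\omega \lim_{j\to\infty}\int_\omega |\phi|\,|u_j|\dd x \\
&= c_\omega \int_\omega |\phi|\dd|\mu| \le c_\omega\,|\mu|(\omega).
\end{align*}
Taking the supremum over all such $\phi$ yields $|\lambda|(\omega) \le c_\omega\,|\mu|(\omega)$, which is the assertion on open sets.

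Finally, for an arbitrary Borel set $\omega$ I would invoke outer regularity of the finite Radon measures $|\lambda|$ and $|\mu|$: choosing open sets $U \supset \omega$ with $|\mu|(U) \downarrow |\mu|(\omega)$ and using the monotonicity of $U \mapsto \|\A^* w^*\|_{\Lrm^\infty(U)}$, the open-set estimate $|\lambda|(\omega) \le |\lambda|(U) \le \|\A^* w^*\|_{\Lrm^\infty(U)}\,|\mu|(U)$ passes to the limit and delivers $|\lambda|(\omega) \le |\mu|(\omega)\,\|\A^* w^*\|_{\Lrm^\infty(\omega)}$. The main obstacle throughout is the one flagged at the outset: securing an upper bound for the masses $\int_\omega |u_j|\dd x$, which fails under plain weak* convergence and is repaired only by area-strict convergence through Theorem~\ref{thm:resch}; the remaining ingredients (the pointwise Cauchy--Schwarz bound and the regularity extension) are routine.
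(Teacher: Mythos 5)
Your argument is correct and is essentially the paper's own proof: both hinge on upgrading area-strict convergence to convergence of the total variations via Theorem~\ref{thm:resch} applied to the integrand $|z|$, combined with the pointwise H\"older bound $|u_j\cdot\A^*w^*|\le \|\A^*w^*\|_{\Lrm^\infty(\omega)}|u_j|$, weak* lower semicontinuity of the mass on open sets, and a regularity argument for general Borel sets. Your explicit derivation of $|u_j|\,\Lcal^d\toweakstar|\mu|$ by testing against $\psi(x)|z|$ is a slightly more careful rendering of the step the paper performs only on open sets $\omega$ with $|\mu|(\partial\omega)=0$, and the passage to arbitrary Borel sets is treated at the same (terse) level of detail in both.
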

\begin{proof} Let $\lambda \in  \llbracket \mu,\A^* w^*\rrbracket$. By definition, there exists a sequence of functions $(u_j) \subset \Lrm^1(\Omega;\R^N)$ for which the measures $(u_j \Lcal^d)$ area-strict converge to $\mu$ and are such that $(u_j \cdot \A^* w^*)\Lcal^d \toweakstar \lambda$. Hence,
	\begin{equation}\label{molli}
	\liminf_{j \to \infty}|\langle u_j , \A^* w^* \rangle|(\omega) \ge |\lambda|(\omega), \quad \text{for every open set $\omega \subset \Omega$}.
	\end{equation}
	On the other hand, by H\"older's inequality, we get the upper bound
	\begin{equation}\label{molli2}
	|\langle u_j, \A^*w \rangle|(\omega) \le  |u_j|(\omega)
	\|\A^* w\|_\infty(\omega),\quad \text{for every open set $\omega \subset \Omega$},
	\end{equation}
	and every $j \in \Nbb$.
	Plugging (\ref{molli}) into (\ref{molli2}) and taking the limit as $j \to \infty$ we get, by Theorem \ref{thm:resch} (applied to $f(z) = |z|$), that
	\[
	|\lambda|(\omega) \le  |\mu|(\omega)
	\|\A^* w\|_\infty(\omega), \quad \text{for every open set $\omega \subset \Omega$ with $|\mu|(\partial \omega) = 0$}.
	\]
	The assertion for general Borel sets follows by a density argument. 
\end{proof}

\begin{corollary}\label{bracket} 	Let $\mu \in u_0 + \ker_\M \A$ and let $w^* \in D(\A^*)$. If $\lambda \in \llbracket \mu , \A^* w^*\rrbracket$, then the Radon measures measures $\lambda$ and $|\lambda|$ are absolutely continuous
	with respect to the measure $|\mu|$ in $\Omega$. Moreover,
	an application of the Radon-Nikod\`ym differentiation
	theorem yields
	$$\left\|\frac{\dd\lambda}{\dd|\mu|}\right\|_{\Lrm^\infty_{|\mu|}} \le 
	\left\|\frac{\dd|\lambda|}{\dd|\mu|}\right\|_{\Lrm^\infty_{|\mu|}} \le 
	\|\A^* w^* \|_{\Lrm^\infty}.$$
\end{corollary}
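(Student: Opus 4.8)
The plan is to extract everything from the set-function estimate of Theorem~\ref{optimal}. The starting point is the observation that $w^* \in D(\A^*)$ forces $\A^* w^* \in \Lrm^\infty(\Omega;\R^N)$, so that $\|\A^* w^*\|_\infty(\omega) \le \|\A^* w^*\|_{\Lrm^\infty}$ for every Borel set $\omega \subset \Omega$. Substituting this uniform bound into the conclusion of Theorem~\ref{optimal} gives the cleaner estimate
\[
|\lambda|(\omega) \le \|\A^* w^*\|_{\Lrm^\infty}\,|\mu|(\omega), \qquad \text{for every Borel set $\omega \subset \Omega$},
\]
valid for all $\lambda \in \llbracket \mu,\A^* w^*\rrbracket$, and this single inequality drives the whole proof.

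First I would settle the absolute continuity. If $|\mu|(\omega) = 0$, the displayed estimate forces $|\lambda|(\omega) = 0$, so $|\lambda| \ll |\mu|$; and since $|\lambda(\omega)| \le |\lambda|(\omega)$ for every Borel set, this in turn yields $\lambda \ll |\mu|$. By the Radon--Nikod\'ym theorem we may therefore introduce the densities $\tfrac{\dd\lambda}{\dd|\mu|}$ and $\tfrac{\dd|\lambda|}{\dd|\mu|}$, both a priori only in $\Lrm^1_{|\mu|}(\Omega)$.

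Next I would upgrade the set-function estimate to a pointwise bound on the density of $|\lambda|$. Writing $C := \|\A^* w^*\|_{\Lrm^\infty}$, suppose for contradiction that $E := \{x \in \Omega : \tfrac{\dd|\lambda|}{\dd|\mu|}(x) > C\}$ has positive $|\mu|$-measure. Integrating the density over $E$ then gives $|\lambda|(E) = \int_E \tfrac{\dd|\lambda|}{\dd|\mu|}\,\dd|\mu| > C\,|\mu|(E)$, contradicting the displayed estimate applied to $\omega = E$. Hence $\tfrac{\dd|\lambda|}{\dd|\mu|} \le C$ for $|\mu|$-a.e.\ $x$, which is exactly $\|\tfrac{\dd|\lambda|}{\dd|\mu|}\|_{\Lrm^\infty_{|\mu|}} \le \|\A^* w^*\|_{\Lrm^\infty}$.

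Finally, the remaining inequality $\|\tfrac{\dd\lambda}{\dd|\mu|}\|_{\Lrm^\infty_{|\mu|}} \le \|\tfrac{\dd|\lambda|}{\dd|\mu|}\|_{\Lrm^\infty_{|\mu|}}$ follows from the standard relation between a real measure and its total variation: since both $\lambda$ and $|\lambda|$ are absolutely continuous with respect to $|\mu|$, the chain rule for Radon--Nikod\'ym derivatives together with the polar decomposition $\dd\lambda = h\,\dd|\lambda|$, $|h| = 1$, gives $|\tfrac{\dd\lambda}{\dd|\mu|}| = \tfrac{\dd|\lambda|}{\dd|\mu|}$ for $|\mu|$-a.e.\ $x$, so the two $\Lrm^\infty_{|\mu|}$-norms in fact coincide. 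I expect no serious obstacle here: the only step requiring a little care is the passage from the set-function estimate of Theorem~\ref{optimal} to the pointwise bound on the density, which is dispatched by the elementary contradiction argument above (equivalently, by the Besicovitch differentiation theorem).
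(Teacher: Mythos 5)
Your proposal is correct and follows exactly the route the paper intends: the corollary is deduced directly from the set-function estimate of Theorem~\ref{optimal} by bounding $\|\A^* w^*\|_\infty(\omega)$ by the global $\Lrm^\infty$-norm, reading off absolute continuity, and upgrading to the pointwise density bounds via the Radon--Nikod\'ym/Besicovitch differentiation theorem. Your closing observation that the first inequality is in fact an equality (since $\bigl|\tfrac{\dd\lambda}{\dd|\mu|}\bigr| = \tfrac{\dd|\lambda|}{\dd|\mu|}$ $|\mu|$-a.e.\ by polar decomposition) is a correct, slightly sharper remark than the statement requires.
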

The following proposition plays a crucial role on proving the generalized saddle-point conditions; it characterizes the absolutely continuous part of elements in $\llbracket \mu,\A^* w^*\rrbracket$ and gives an upper bound for the density of its singular part. 
\begin{theorem}\label{thm:caracteriza}	Let $\mu \in u_0 +  \ker_\M \A$ and  $w^* \in D(\A^*)$. If $\lambda \in  \llbracket \mu,\A^* w^*\rrbracket$ and $\mathcal R[w^*] > - \infty$, then
	\begin{equation}\label{eq:abs2}
	\frac{\dd\lambda}{\dd|\mu|}(x) \le f^\infty\left(x,\frac{\dd\mu}{\dd|\mu^s|}(x)\right), \quad \text{for $|\mu^s|$-a.e. $x \in \Omega$},
	\end{equation}
	and
	\begin{equation}\label{eq:abs}
	\frac{\dd\lambda}{\dd\Lcal^d}(x) = \frac{\dd \lambda}{\dd \Lcal^d}(x) \cdot \A^*w^*(x), \quad \text{for $\mathcal L^d$-a.e. $x \in \Omega$}.
	\end{equation}
\end{theorem}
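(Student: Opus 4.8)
The plan is to treat the two assertions by genuinely different mechanisms: the singular bound \eqref{eq:abs2} will follow from the Fenchel--Young inequality combined with the Reshetnyak continuity of the relaxed energy, whereas the absolutely continuous identity \eqref{eq:abs} (whose right-hand side is the classical product $\frac{\dd\mu}{\dd\Lcal^d}\cdot\A^* w^*$) requires a Lusin-type continuous approximation of $\A^* w^*$. Throughout I fix a defining sequence $(u_j)\subset u_0+\ker\A$ with $u_j\Lcal^d\to\mu$ area-strictly and $(u_j\cdot\A^* w^*)\Lcal^d\toweakstar\lambda$. Two preliminary facts will be used repeatedly: first, since $\mathcal R[w^*]>-\infty$ we have $f^*(\frarg,\A^* w^*)\in\Lrm^1$, and the inclusion $\{z:f^*(x,z)<\infty\}\subset M\cdot\Bbb^N$ forces $|\A^* w^*|\le M$ a.e.; second, applying Theorem~\ref{thm:resch} to $f(z)=|z|$ (so $f^\infty(z)=|z|$) yields $|u_j|\Lcal^d\toweakstar|\mu|$ in $\M(\overline\Omega)$, which is what lets me pass Hölder bounds to the limit up to the boundary.

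For \eqref{eq:abs2} I start from the pointwise Fenchel--Young inequality $u_j\cdot\A^* w^*\le f(\frarg,u_j)+f^*(\frarg,\A^* w^*)$ and integrate over balls $\omega$ whose boundary is null for both $|\lambda|$ and $\langle\mu\rangle$. On such continuity sets $\int_\omega u_j\cdot\A^* w^*\,\dd x\to\lambda(\omega)$, the fixed term $\int_\omega f^*(\frarg,\A^* w^*)\,\dd x$ is a finite absolutely continuous contribution, and Theorem~\ref{thm:resch} (recall $f\in\E$ by Lemma~\ref{rem:rec1}) gives $\int_\omega f(\frarg,u_j)\,\dd x\to\int_\omega f(x,\frac{\dd\mu}{\dd\Lcal^d})\,\dd x+\int_\omega f^\infty(x,\frac{\dd\mu^s}{\dd|\mu^s|})\,\dd|\mu^s|$. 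Passing to the limit and using outer regularity upgrades this to the measure estimate $\lambda\le\bigl[f(\frarg,\frac{\dd\mu}{\dd\Lcal^d})+f^*(\frarg,\A^* w^*)\bigr]\Lcal^d+f^\infty(\frarg,\frac{\dd\mu^s}{\dd|\mu^s|})|\mu^s|$. Differentiating with respect to $|\mu^s|$ annihilates the $\Lcal^d$-absolutely continuous terms and leaves $\frac{\dd\lambda}{\dd|\mu^s|}\le f^\infty(\frarg,\frac{\dd\mu^s}{\dd|\mu^s|})=f^\infty(\frarg,\frac{\dd\mu}{\dd|\mu^s|})$, which is \eqref{eq:abs2} after identifying $\frac{\dd\lambda}{\dd|\mu|}$ with $\frac{\dd\lambda}{\dd|\mu^s|}$ on the $|\mu^s|$-part (Corollary~\ref{bracket} guarantees $\lambda\ll|\mu|$).

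For \eqref{eq:abs} the Fenchel--Young bound only yields an inequality, so instead I approximate $\A^* w^*$ by continuous fields. By Lusin's theorem choose compact sets $K_k\subset\Omega$ with $\Lcal^d(\Omega\setminus K_k)\to0$ fast, $|\mu|(\partial K_k)=0$, on which $\A^* w^*$ is continuous, together with continuous extensions $g_k\in\Crm(\overline\Omega;\R^N)$ satisfying $g_k=\A^* w^*$ on $K_k$, $\|g_k\|_\infty\le M$ and $g_k\to\A^* w^*$ in $\Lrm^1(\Omega)$. For each fixed $k$ the integrand $(x,z)\mapsto z\cdot g_k(x)$ lies in $\E(\Omega;\R^N)$, so Theorem~\ref{thm:resch} identifies the full weak* limit $(u_j\cdot g_k)\Lcal^d\toweakstar\lambda_k$ with $\lambda_k=(\frac{\dd\mu}{\dd\Lcal^d}\cdot g_k)\Lcal^d+(\frac{\dd\mu^s}{\dd|\mu^s|}\cdot g_k)|\mu^s|$, whose $\Lcal^d$-density is exactly $\frac{\dd\mu}{\dd\Lcal^d}\cdot g_k$. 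Since $g_k=\A^* w^*$ on $K_k$, Hölder's inequality and $|u_j|\Lcal^d\toweakstar|\mu|$ give, on continuity sets, $|\lambda-\lambda_k|\le 2M\,|\mu|\restrict(\Omega\setminus K_k)$; taking the $\Lcal^d$-density of this bound yields $\bigl|\frac{\dd\lambda}{\dd\Lcal^d}-\frac{\dd\mu}{\dd\Lcal^d}\cdot g_k\bigr|\le 2M\bigl|\frac{\dd\mu}{\dd\Lcal^d}\bigr|\,\ONE_{\Omega\setminus K_k}$ a.e. Letting $k\to\infty$, so that $\ONE_{\Omega\setminus K_k}\to0$ a.e. and $g_k\to\A^* w^*$, produces the identity $\frac{\dd\lambda}{\dd\Lcal^d}=\frac{\dd\mu}{\dd\Lcal^d}\cdot\A^* w^*$ of \eqref{eq:abs}.

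I expect the absolutely continuous identity to be the delicate point. The difficulty is structural: the singular parts of $\lambda$ and of each approximant $\lambda_k$ genuinely disagree, because $\A^* w^*$ is only an $\Lrm^\infty_{\Lcal^d}$-class and carries no canonical value on the $\Lcal^d$-null set supporting $\mu^s$---this is precisely why \eqref{eq:abs2} can only be an inequality. The mechanism that rescues the absolutely continuous part is that the uniform bound $|\A^* w^*|\le M$ together with Lusin's theorem confines the approximation error $\A^* w^*-g_k$ to the set $\Omega\setminus K_k$ of vanishing Lebesgue measure; consequently only the $\Lcal^d$-density of the error is forced to zero, while its singular density is left uncontrolled, in harmony with the two different conclusions. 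The one step demanding genuine care is the passage of the Hölder bound to the limit near $\partial K_k$, for which the strengthened convergence $|u_j|\Lcal^d\toweakstar|\mu|$---a consequence of area-strict convergence through Theorem~\ref{thm:resch}---is indispensable, as ordinary weak* or merely strict convergence would not suffice.
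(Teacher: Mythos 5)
Your proof is correct, but it reaches both conclusions by a genuinely different route from the paper. For the absolutely continuous identity \eqref{eq:abs} (whose right-hand side you correctly read as $\frac{\dd\mu}{\dd\Lcal^d}\cdot\A^*w^*$ despite the typo in the display), the paper works directly at $\Lcal^d$-generic points: it fixes a simultaneous Lebesgue point $x_0$ of $\frac{\dd\mu}{\dd\Lcal^d}$ and of $\frac{\dd\mu}{\dd\Lcal^d}\cdot\A^*w^*$ at which $|\mu^s|$ has vanishing $d$-dimensional density, freezes $P_0=\frac{\dd\mu}{\dd\Lcal^d}(x_0)$, and shows $\absb{\lambda(B_r(x_0))-\int_{B_r(x_0)}P_0\cdot\A^*w^*\dd x}=\SmallO(r^d)$ using that $(u_j-P_0)\Lcal^d$ area-strictly converges to $\mu-P_0\Lcal^d$ and Theorem \ref{thm:resch} applied to $|z-P_0|$; your Lusin approximation by continuous fields $g_k$, with Theorem \ref{thm:resch} applied to the linear integrands $z\cdot g_k(x)\in\E(\Omega;\R^N)$ and the error confined to $\Omega\setminus K_k$, replaces this blow-up entirely. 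For the singular bound \eqref{eq:abs2}, the paper integrates the inequality $f^\infty(x,u_j)\ge u_j\cdot\A^*w^*$ (valid because $\A^*w^*(x)\in\dom f^*(x,\frarg)$ a.e.\ when $\Rcal[w^*]>-\infty$) over balls and blows up at $|\mu^s|$-generic points, whereas you use Fenchel--Young with $f$ itself and then differentiate the resulting measure inequality with respect to $|\mu^s|$, which kills the $\Lcal^d$-absolutely continuous terms. Both of your arguments are sound; what your route buys is a transparent structural explanation of why the a.c.\ part is an identity while the singular part is only an inequality (the Lusin error lives on a Lebesgue-null set, so only the $\Lcal^d$-density of the defect is forced to vanish), and it avoids juggling several simultaneous derivation points. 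What the paper's route buys is that the same blow-up machinery immediately yields the refinement of Remark \ref{rem:better} (identification of $\frac{\dd\lambda}{\dd|\mu^s|}$ when $\A^*w^*$ is $|\mu^s|$-measurable), and it sidesteps the one delicate point in your argument, namely passing the H\"older bound $2M\int_{B\cap(\Omega\setminus K_k)}|u_j|$ to the limit over the \emph{open} sets $\Omega\setminus K_k$, which requires exactly the complementation trick and the normalization $|\mu|(\partial K_k)=0$ that you impose; with that proviso your proof is complete.
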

\begin{proof}	Let $\lambda \in \llbracket \mu, \A^*w^* \rrbracket$. By definition
	we may find sequence $(u_j) \subset \Lrm^1(\Omega;\R^N)$ that area-strict converges to $\mu$ in the sense of Radon measures, i.e., such that
	\[
	u_j \, \mathcal L^d\toweakstar \mu \in \M(\Omega;\R^N), \quad \ \area{u_j \, \mathcal L^d}(\Omega) \to \area{\mu}(\Omega),	\]
	for which
	\[
	(u_j \cdot \A^* w^*)\, \mathcal L^d \toweakstar \lambda, \qquad \text{in $\M(\Omega;\R^N)$}.
	\]
	Let $x_0 \in (\text{supp} \;\lambda^s) \cap \Omega $ be a point with the following properties:
	\begin{equation}\label{eq:Lebesgue}
	\frac{\dd\mu^s}{\dd|\mu^s|}(x_0) = \frac{\dd\mu}{\dd|\mu^s|}(x_0) = \lim_{r \downarrow 0} \frac{\mu(B_r(x_0))}{|\mu^s|(B_r(x_0))} < \infty,
	\end{equation}
	\begin{equation}
	\lim_{r \downarrow 0} \frac{\int_{B_r(x_0)} \frac{\dd \lambda}{\dd \Lcal^d}(x) \, \text{d}x}{|\mu^s|(B_r(x_0))} = 0,
	\end{equation}	
	\begin{equation}
	\frac{\dd \lambda}{\dd |\mu^s|}(x_0) = \lim_{r \downarrow 0} \frac{\lambda(B_r(x_0))}{|\mu^s|(B_r(x_0))} < \infty.\label{eq:Lebesgue2}
	\end{equation}
	Using the  principle 
	\[
	f^\infty(x,z) \ge \set{z \cdot z^*}{z^* \in \R^N,  f^*(x,z^*) < +\infty}
	\]
	and the assumption that $f^*(x,\A^* w^*(x))$ is essentially bounded by $M$ for every $x \in \Omega$ (here we use that $\Rcal[w^*] > -\infty$), we deduce the simple inequality
	\begin{equation}\label{eq:2app}
	\int_{B_s(x_0)}f^\infty(x,u_j(x)) \, \text{d}x \ge 	\int_{B_s(x_0)}u_j \cdot \A^* w^* \, \text{d}x,
	\end{equation} 
for every $s \in (0,\dist(x_0,\partial \Omega))$.
	We let $j \to \infty$ on both sides of the inequality to get  
	\begin{equation*}
	\lim_{j \to \infty} \int_{B_s(x_0)}f^\infty(x,u_j(x)) \, \text{d}x \ge\lambda(B_s(x_0)), \quad \text{for a.e. $s \in (0,\dist(x_0,\partial \Omega))$}.
	\end{equation*}
	Recall that $u_j \, \mathcal L^d$ area-strict converges to $\mu$ and by construction $f^\infty$ is positively $1$-homogeneous in its second argument. Hence, we may apply Theorem~\ref{thm:resch} to the limit in the left hand side of the inequality to obtain 
	\begin{align*}
	\frac{1}{|\mu^s|(B_s(x_0))}\int_{B_s(x_0)} & f^\infty\left(x,\frac{\dd\mu}{\dd|\mu|}(x)\right) \, \text{d}|\mu|(x) \\ & \ge \frac{\lambda(B_s(x_0))}{|\mu^s|(B_s(x_0))}, \quad \text{for a.e. $s \in (0,r)$}.
	\end{align*}
	Using properties \eqref{eq:Lebesgue}-\eqref{eq:Lebesgue2} together with the uniform Lipschitz continuity of $f^\infty$ on its second argument we deduce that
	\[
	f^\infty\left(x,\frac{\dd\mu}{\dd|\mu^s|}(x_0)\right) \ge \frac{\dd  \lambda}{\dd |\mu^s|}(x_0).
	\] 
	The sought statement follows by observing that \eqref{eq:Lebesgue}-\eqref{eq:Lebesgue2} hold simultaneously in $\Omega$ for $|\mu^s|$-a.e. $x _0 \in \Omega$.

	For the equality on Lebesgue points, let $x_0 \in \Omega$ be such that
	\begin{equation}\label{eq:nosingular}
	\lim_{r \downarrow 0} \frac{|\mu^s|(B_r(x_0))}{r^N} = 0, 
		\end{equation}
	\begin{equation}
	\qquad \lim_{r \downarrow 0} \frac{1}{r^N} \int_{B_r(x_0)} \bigg| \frac{\dd \mu}{\dd \Lcal^d}(x)  - \frac{\dd \mu}{\dd \Lcal^d}(x_0) \bigg| \dd x = 0,
		\end{equation}
	and
	\begin{equation}\label{eq:lebesguepoint}
	\frac{\dd (\frac{\dd \mu}{\dd \mathcal L^d} \cdot \A^* w^*)}{\dd\mathcal L^d}(x_0) = \frac{\dd \mu}{\dd \mathcal L^d}(x_0) \cdot \A^* w^*(x_0).
	\end{equation}
	Let 
	\[
	P_0 \coloneqq \frac{\dd \mu}{\dd \mathcal L^d}(x_0).
	\]
	Then, by definition, for a.e. $r \in (0, \dist(x_0,\partial \Omega))$ it holds that
	\begin{align*} \bigg|\lambda(B_r(x_0))  - \int_{B_r(x_0)} &  P_0 \cdot \A^*w^* \, \text{d}x\bigg| \\
	& = \lim_{n \to \infty}\left|	\int_{B_r(x_0)}u_j \cdot \A^*w^* \, \text{d}x - \int_{B_r(x_0)} P_0\cdot \A^*w^* \, \text{d}x\right| \\
	& \le \|\A^*w^*\|_{\Lrm^\infty} \cdot \lim_{j \to \infty} \int_{B_r(x_0)} |u_j - P_0 | \dd x \\
	&  \le \|\A^*w^*\|_{\Lrm^\infty} \cdot \bigg(\int_{B_r(x_0)} \bigg| \frac{\dd \mu}{\dd \Lcal^d}  - P_0\bigg| \dd x  \\ 
	& \qquad +  |\mu^s|(B_r(x_0))\bigg) = \SmallO(r^N),
	\end{align*}
	where in the last step we have used that $(u_j - P_0)\Lcal^d$ area-strict converges to $\mu - P_0 \Lcal^d$. This follows from Theorem \ref{thm:resch} and the fact that $f^\infty(\frarg - P_0) = f^\infty(\frarg)$.
	
	Essentially, this means that computing the Radon-Nikod\'ym derivative of $\lambda$ at $x_0$ is equivalent to calculate the correspondent derivative of the measure $(\frac{\dd \mu}{\dd \mathcal L^d} \cdot \A^*w^*) \mathcal L^d$ at $x_0$. Under this reasoning we use \eqref{eq:lebesguepoint} to calculate 
	\[
	\frac{\dd \lambda}{\dd \Lcal^d}(x_0) = \frac{\dd \mu}{\dd \Lcal^d}(x_0) \cdot \A^* w^*(x_0).
	\]
	Properties \eqref{eq:nosingular}-\eqref{eq:lebesguepoint} hold simultaneously for $\mathcal L^d$-a.e. $x_0 \in \Omega$ from where~\eqref{eq:abs} follows.   
\end{proof}
 
 \begin{remark}\label{rem:better}
 If $\A^* w^*$ is $|\mu^s|$-measurable, then one can prove (by a similar argument to the one used in the proof of  \eqref{eq:abs}) that
	\[
	\frac{\dd \lambda}{\dd |\mu^s|}(x_0) = \frac{\dd \mu}{\dd |\mu^s|}(x_0) \cdot \A^* w^*(x_0), \quad \text{for $|\mu^s|$-a.e. $x_0 \in \Omega$}.\]
 \end{remark}
 
 For a sequence $(u_j) \subset \Lrm(\Omega;\R^N)$ that area-strict converges to some $\mu \in \ker_{\M} \A$ it is automatic to verify, by means of Theorem \ref{thm:resch}, that 
 \begin{equation}\label{eq:como medidas}
 f(\frarg,u_j) \, \Lcal^d \toweakstar f\left(\;\frarg\;,\frac{\dd \mu}{\dd \Lcal^d}\right) \, \Lcal^d \restrict \Omega+ f^\infty\left(\;\frarg\;,\frac{\dd\mu^s}
 {\dd|\mu^s|}\right)\dd|\lambda_s|
 \end{equation}
 in $\M^+(\Omega)$. If one dispenses the assumption that $(u_j)$ area-strict converges $\mu$ and only 
 assumes that $u_j \, \mathcal L^d \toweakstar \mu$ in $\M(\Omega;\R^N)$ (or even the stronger strict convergence) the convergence \eqref{eq:como medidas} may not hold as already observed in Remark \ref{rem:sharpness}. However, as the next proposition asserts, it does hold for minimizing sequences.

\begin{theorem}[Uniqueness and improved convergence]\label{thm:minimalsequence}
	Let $(u_j) \subset u_0 +  \ker \A$ be a minimizing sequence  of problem \eqref{eq:problem} with $u_j\, \mathcal L^d \toweakstar \mu$ in $\M(\Omega;\R^N)$.
	Then the sequence of real-valued radon measures $(f(\frarg,u_j)\, \Lcal^d \restrict \Omega)$ weakly* converges in $\Omega$, in the sense of Radon measures, to the measure
	\[
	f\left(\;\frarg\;,\frac{\dd \mu}{\dd \Lcal^d}\right) \, \mathcal L^d \restrict \Omega+ f^\infty\left(\;\frarg\;,\frac{\dd\mu^s}{\dd|\mu^s|}\right)\dd |\mu^s|.
	\]	
	Even more, if $f(x,\frarg)$ and $f^\infty(x,\frarg)$ are strictly convex
	 for all $x \in \Omega$, then
	$\mu$ is the unique minimizer of \eqref{eq:rproblem} and $u_j \, \mathcal L^d$ area-strict converges to $\mu$ in $\M(\Omega;\R^N)$. 
\end{theorem}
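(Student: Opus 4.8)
The plan is to establish the two assertions in turn: first the weak* convergence of the energy measures for an \emph{arbitrary} minimizing sequence, and then the area-strict convergence and uniqueness under the additional strict convexity. Write $a:=\frac{\dd\mu}{\dd\Lcal^d}$, let
\[
\Lambda := f\bigl(\frarg,a\bigr)\,\Lcal^d\restrict\Omega + f^\infty\Bigl(\frarg,\tfrac{\dd\mu^s}{\dd|\mu^s|}\Bigr)\,|\mu^s|
\]
denote the candidate limit, and set $\nu_j := f(\frarg,u_j)\,\Lcal^d$. By Theorem~\ref{prop:relax} we have $\cl\Fcal[\mu]=\Lambda(\Omega)$, and since the relaxed infimum equals $\inf\Fcal$ I would first record that $\mu$ minimizes $\cl\Fcal$ and that $\nu_j(\Omega)=\Fcal[u_j]\to\Lambda(\Omega)$. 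Coercivity in \eqref{eq:linear} bounds $(u_j)$ in $\Lrm^1$, and the linear growth then bounds the masses $\nu_j(\Omega)$, so $(\nu_j)$ is weak*-precompact in $\M^+(\Omega)$.

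For the first assertion I would upgrade the lower semicontinuity of Theorem~\ref{thm:alibertlsc} to genuine convergence by a complement argument. Applied on an open set $A\subseteq\Omega$ (legitimate since $f\ge 0$ and $u_j\restrict A\toweakstar\mu\restrict A$), that theorem gives $\liminf_j\nu_j(A)\ge\Lambda(A)$. Choosing $A$ with $\Lambda(\partial A)=0$ and setting $B:=\Omega\setminus\cl A$, and using $\nu_j(A)+\nu_j(B)\le\nu_j(\Omega)$ together with $\nu_j(\Omega)\to\Lambda(\Omega)$, I would obtain
\[
\limsup_j\nu_j(A)\le\Lambda(\Omega)-\liminf_j\nu_j(B)\le\Lambda(\Omega)-\Lambda(B)=\Lambda(A).
\]
Hence $\nu_j(A)\to\Lambda(A)$ on every $\Lambda$-continuity set, which is equivalent to $\nu_j\toweakstar\Lambda$ for the full sequence; this is precisely the first claim.

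For the second assertion I would pass, along a subsequence, to the generalized Young measure $(\nu_x,\lambda,\nu_x^\infty)$ generated by $(u_j)$. The energy identity $\lim_j\Fcal[u_j]=\cl\Fcal[\mu]$ forces equality in the two Jensen inequalities comparing the Young-measure representation of the limit energy with $\cl\Fcal$ evaluated at the barycenter $\mu$: strict convexity of $f(x,\frarg)$ collapses the oscillation measure $\nu_x$ to $\delta_{a(x)}$, and strict convexity of $f^\infty(x,\frarg)$ collapses the concentration-angle measure $\nu_x^\infty$ to a single Dirac. Such an elementary Young measure carries no oscillation and a single concentration direction, and testing its representation against the area integrand $\sqrt{1+|z|^2}\in\E(\Omega;\R^N)$ (whose recession is $|z|$) yields $\area{u_j\Lcal^d}(\Omega)\to\area{\mu}(\Omega)$, i.e.\ area-strict convergence. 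For uniqueness I would exploit that $\cl\Fcal$ is convex on the affine space $u_0+\ker_\M\A$: if $\mu_1,\mu_2$ both minimize, averaging two recovery sequences produces a minimizing sequence for $\tfrac12(\mu_1+\mu_2)$, so this midpoint is again a minimizer and equality holds in the convexity inequality; strict convexity of $f$ then forces $\tfrac{\dd\mu_1}{\dd\Lcal^d}=\tfrac{\dd\mu_2}{\dd\Lcal^d}$ a.e., and strict convexity of $f^\infty$ forces the polars $\tfrac{\dd\mu_1^s}{\dd|\mu_1^s|}=\tfrac{\dd\mu_2^s}{\dd|\mu_2^s|}$ to agree $|\mu^s|$-a.e.

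I expect the main obstacle to be this final rigidity on the singular part. Because $f^\infty$ is positively $1$-homogeneous it is never strictly convex along rays, so both the convexity argument and the Young-measure argument pin down the \emph{direction} of the singular concentration but not its \emph{magnitude}; the singular-energy functional $\sigma\mapsto\int f^\infty(x,\tfrac{\dd\sigma}{\dd|\sigma|})\dd|\sigma|$ is degenerate precisely in the radial variable. Closing this gap — showing that coincidence of the concentration directions together with $\mu_1-\mu_2\in\ker_\M\A$ forces $|\mu_1^s|=|\mu_2^s|$, equivalently that the elementary-Young-measure rigidity of the averaged sequence upgrades $u^1_j-u^2_j\to 0$ in measure to coincidence of the generated singular measures — is the delicate step, and it is here that the full strength of the strict convexity hypotheses, rather than mere convexity of $\cl\Fcal$, must be brought to bear.
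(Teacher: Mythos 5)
Your proof of the first assertion is correct and is essentially the argument in the paper: both rest on the observation that Theorem \ref{thm:alibertlsc} localizes to open subsets, so that any weak* limit point $\Lambda$ of the energy measures dominates $\cl\Fcal(\mu,\frarg)$ as a measure, while the convergence of the total masses $\Fcal[u_j]\to\inf\Fcal=\cl\Fcal[\mu]$ forces the two measures to coincide. Your complement argument on $\Lambda$-continuity sets and the paper's ``an ordered pair of positive measures with equal total mass must agree'' are the same mechanism; the only cosmetic difference is that you avoid extracting a subsequence.

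The second assertion is where the proposal falls short, and you have correctly located the weak point yourself. Two separate remarks. First, for the area-strict convergence your Young-measure route does close, but not quite for the reason you are looking for: the concentration-angle measures $\nu_x^\infty$ are by construction probability measures on the unit sphere, so once strict convexity of $f^\infty$ on norms collapses them onto a single ray they collapse onto a single unit vector $P(x)$; the ``magnitude'' is not carried by $\nu_x^\infty$ at all but by the concentration measure $\lambda$, which is then identified with $|\mu^s|$ through the generation identity $\mu^s=P\lambda^s$ together with $|P(x)|=1$, while a possible absolutely continuous part of $\lambda$ is excluded by the equality case of $f(z+w)\le f(z)+f^\infty(w)$ --- which uses strict convexity of $f$, not of $f^\infty$. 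So the radial degeneracy you worry about is harmless for this half. Second, for uniqueness the worry is genuine and your convexity argument does not resolve it: if $\mu_1^s$ and $\mu_2^s$ are mutually singular, equality in the subadditivity of $\sigma\mapsto\int f^\infty(x,\frac{\dd\sigma}{\dd|\sigma|})\,\dd|\sigma|$ holds automatically whatever the polar directions are, so strict convexity of $f^\infty$ pins down nothing, and one must bring the constraint that $\mu_1-\mu_2$ is a purely singular element of $\ker_{\M}\A$ into play. You should be aware, however, that the paper does not supply these missing steps either: it dispatches uniqueness in one line via convexity of $\ker_{\M}\A$ and cites \cite[Theorem 5.3]{AlibertBouchitte97} for the improved convergence, so on this half of the statement your proposal is no less complete than the printed proof, and your diagnosis of where the real work lies is accurate.
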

\begin{remark}\label{rem:convex}
	Recall that strict convexity for a positively $1$-homogeneous function $g:\R^N \to \R$ -- also called strictly convex on norms -- is equivalent to the convexity of  its unit ball, this is, 
	\[
	g(z_1) = g(z_2) = g(s z_1 + (1- s)z_2), \qquad \text{ for $s \in (0,1)$}
	\]
	implies 
	\[
	z_1 = z_2.
	\]
\end{remark}

\begin{proof} Set $\Lambda_j \in \M(\Omega)$ to be the real-valued Radon measure defined as
	\[
	\Lambda_j(B) := \int_B f(x,u_j(x))\, \text{d}x, \quad \text{for any open set $B \subset \Omega$}.
	\]
	Since $(u_j)$ is a minimizing sequence, it is also $\Lrm^1$-uniformly bounded and 
	hence 
	\[
	\sup_{j \in \mathbb N}|\Lambda_j|(\Omega) < +\infty.
	\]
	We may assume, up to taking a subsequence (not re-labeled), that there exists  positive Radon measures $\Lambda,\sigma \in \M^+(\Omega)$ 
for which	\[
	\Lambda_j \toweakstar \Lambda, \quad \text{and}\quad |u_j| \Lcal^d\toweakstar \sigma.
	\]
	We do the following observation: the conclusion of Theorem~\ref{thm:alibertlsc} also holds any arbitrary open set $B \subset \Omega$. Hence,
	\[
	\Lambda(B) = \lim_{j \to \infty} \Lambda_j(B) \ge \overline{ \mathcal F}(\mu,B), 
	\]
	for every open subset $B$ of $\Omega$ with $\Lambda(\partial B) = \sigma(\partial B) = 0$, and where we have set $\overline{ \mathcal F}(\mu,\frarg)$ to be the Radon measure that takes the values
	\[
	\overline{ \mathcal F}(\mu,B) := \int_{B} f\left(x,\frac{\dd \mu}{\dd \Lcal^d}(x)\right) \, \text{d}x +
	\int_{\cl B} f^\infty\left(x,\frac{\dd \mu^s}{\dd |\mu^s|}(x)\right) \, \text{d}|\mu^s|(x),
	\]
	on open sets $B \subset \Omega$.
	Using a density argument, we conclude that
	\begin{equation}\label{eq:positivity}
	\Lambda \ge \overline{ \mathcal F}(\mu,\frarg), \quad \text{in the sense of real-valued Radon measures}. 
	\end{equation}
	So far we have not used the fact that $(u_j)$ is a minimizing sequence. Recall that, by definition, this is equivalent to 
	\[
	\Lambda_j(\Omega) \to \Lambda(\Omega) = \overline{ \mathcal F}(\mu,\Omega) = \inf_{u_0 +  \ker \A} \mathcal F.
	\]
	The latter mass convergence and \eqref{eq:positivity} are sufficient conditions for $\Lambda$ and $\overline{ \mathcal F}(\mu,\frarg)$ to represent the same Radon measure in $\M(\Omega)$, i.e.,
	\[
	\Lambda = \overline{ \mathcal F}(\mu,\frarg), \quad \text{in $\M(\Omega)$}.
	\]  
	Since the passing to a convergent subsequence was arbitrary, this proves 
	\[
 f(\frarg,u_j) \, \mathcal L^d \toweakstar f\left(\;\frarg\;,\frac{\dd \mu}{\dd \mathcal L^d}\right) \, \mathcal L^d \restrict \Omega+ f^\infty\left(\;\frarg\;,\frac{\dd\mu^s}
 {\dd|\mu^s|}\right)\dd|\lambda_s|.
	\]	
 To see that for strictly convex integrands $\mu$ is the unique minimizer of~\eqref{eq:rproblem}, one simply uses the strict convexity of $f$ and $f^\infty$, and the fact that $\ker_{\M} \A$ is a convex space. The improvement of convergence for minimizing sequences can be found in~\cite[Theorem 5.3]{AlibertBouchitte97}.  
 
\end{proof}

\begin{remark}
	The improved convergence convergence for minimizing sequences of strictly convex integrands plays no role in our characterization of the extremality conditions of problems \eqref{eq:problem} and \eqref{eq:dualproblem}. Nevertheless, we have decided to include as it is a standard result for applications in calculus of variations. 
\end{remark}

We are now in position to prove our main result: 

\begin{proof}[Proof of Theorem \ref{thm:optimal}]
	Let $\mu \in u_0 + \ker_\M \A$ be a generalized solution of problem \eqref{eq:problem}. Property (H2) tells us that there exists a sequence $(u_j) \subset u_0 + \ker \A$ that area-strict converges to $\mu$ so that $\llbracket \mu , \A^* w^*\rrbracket$ is not the empty set; we let $\lambda \in \llbracket \mu , \A^* w^*\rrbracket$.
	 
	Now let $(u_j) \subset u_0 +  \ker \A$ be an arbitrary sequence generating $\lambda$.
	By Theorem \ref{thm:resch} and the minimality of $\mu$ it also holds that $(u_j)$ is a minimizing sequence for problem \eqref{eq:problem}.
	
 In return, Theorem \ref{thm:minimalsequence} implies that the sequence of measures $(f(\frarg,u_j) \, \mathcal L^d \restrict \Omega)$ weakly* converges to the Radon measure
	\[
f\left(x,\frac{\dd\mu}{\dd\mathcal L^d}\right) \, \mathcal L^d \restrict \Omega ~+ ~ f^\infty\left(\;\frarg \;,\frac{\dd \mu}{\dd |\mu^s|}\right) \, |\mu^s|
	\]
	in $\M^+(\Omega)$.
	Since $f$ is convex on its second argument, it must hold that
	\[
	f^{**}(x,\frarg) = f(x,\frarg), \qquad \text{for every $x \in \Omega$}.\] 
	Hence,
\begin{equation}\label{eq:firstinequality}
	(f(\frarg,u_j) \, \mathcal L^d)(B) \ge  
	\int_{B}u_j \cdot \A^*w^*\, \text{d}x  - \int_{B} f^*(x,\A^* w^*)\, \text{d}x,
	\end{equation}
	for every Borel subset $B \subset \Omega$,
 herewith by Theorem \ref{thm:minimalsequence} and  \eqref{eq:firstinequality} we get (by letting $j \to \infty$) that
	\begin{align}\label{eq:thrid}
	\begin{split}
	f\left(\frarg,\frac{\dd \mu}{\dd \Lcal^d}\right) \, \Lcal^d \restrict \Omega &  + f^\infty\left(\frarg,\frac{\dd\mu}
	{\dd|\mu^s|}\right)\, |\mu^s|  \\ 
	& \ge \lambda - f^*(x,\A^* w^*) \, \mathcal L^d \restrict \Omega, 
	\end{split}
	\end{align}
	in the sense of measures. Also, by the equality in Proposition \ref{prop:nogap} we know that $\overline{\mathcal F}[\mu] = \mathcal R[w^*]$ so that
	\begin{align*}
\Bigg(f\left(\frarg,\frac{\dd \mu}{\dd \Lcal^d}\right) \,&  \Lcal^d \restrict \Omega +  f^\infty\left(\frarg,\frac{\dd\mu}
{\dd|\mu^s|}\right)\, |\mu^s| \Bigg)(\Omega)  \\ 
&  = \overline{\mathcal F}[\mu] = \mathcal R[w^*] \\
	& = \big(\dpr{w^*,\tau} -f^*(\frarg,\A^* w^*) \, \mathcal L^d \big)(\Omega) \\
	& = \big(\lambda - f^*(\frarg,\A^* w^*) \, \mathcal L^d\big)(\Omega),
	\end{align*}
	where in the last equality we used that $\lambda(\Omega) = \dpr{w^*,\tau}$ for any $\lambda \in \llbracket \mu,\A^* w^*\rrbracket$ with $\mu \in u_0 + \ker_{\M} \A$ and $w^* \in D(\A^*)$. The inequality, as measures, in \eqref{eq:thrid} and the equality of their total mass in the last formula tells us that the measures in question must be agree as elements of $\M(\Omega)$. In other words, 
	\begin{align*}
	f\left(\frarg,\frac{\dd \mu}{\dd \Lcal^d}\right) \,&  \Lcal^d \restrict \Omega +  f^\infty\left(\frarg,\frac{\dd\mu}
	{\dd|\mu^s|}\right)\, |\mu^s|  = \lambda - f^*(\frarg,\A^* w^*) \, \mathcal L^d \restrict \Omega,
	\end{align*}
	as measures in $\M(\Omega)$. 
	Finally, we recall the characterization from Theorem \ref{thm:caracteriza} so that
	\[
	f\left(x,\frac{\dd\mu}{\dd\Lcal^d}(x)\right) + f^*(x,\A^*u^*(x))  = \frac{\dd\mu}{\dd\Lcal^d}(x) \cdot \A^*w^*(x),
	\]
	for $\Lcal^d$-a.e. $x \in \Omega$, whereby
	\[
	\frac{\dd \lambda}{\dd|\mu^s|}(x) = f^\infty\left(\frac{\dd\mu}{\dd|\mu^s|}(x)\right) \quad \text{for $|\lambda^s|$-a.e. $x \in \Omega$}.
	\]
	The latter equalities fully characterize $\llbracket \mu,\A^*w^*\rrbracket$ by means of Corollary \ref{bracket} and the Radon-Nikod\'ym Decomposition Theorem. In particular, $\llbracket \mu,\A^*w^*\rrbracket$ is the singleton
		\[
		\left\{ \left(\frac{\dd\mu}{\dd\Lcal^d} \cdot \A^* w^*\right) \, \Lcal^d \restrict \Omega ~+ ~ f^\infty\left(\;\frarg \;,\frac{\dd \mu}{\dd |\mu^s|}\right) \, |\mu^s|\right\}.
		\] This proves that (i) implies (ii). 
	
	That (ii) implies (i) follows from the facts that $\inf \overline{\mathcal F} \ge \sup  \mathcal R$ and 
	\begin{equation}\label{eq:final}
	\overline{\mathcal F}[\mu] = \mathcal R[w^*].
	\end{equation}
	Indeed, the equality above implies that $\mu$ solves problem \eqref{eq:problem} and $w^*$ solves problem \eqref{eq:dualproblem}.
	To show that \eqref{eq:final} holds let $(u_j) \subset u_0 +  \ker \A$ be the (area-strict convergent) recovery sequence for $\mu$ in the proof Proposition \ref{prop:relax} so that
	\[
	f(\frarg,u_j) \toweakstar  f\left(\frarg,\frac{\dd \mu}{\dd \Lcal^d}\right) \,\Lcal^d \restrict \Omega +  f^\infty\left(\frarg,\frac{\dd\mu}
	{\dd|\mu^s|}\right)\, |\mu^s|.
	\]
	By assumption
	\[
	\lambda_j \coloneqq (u_j \cdot \A^*w^*)\Lcal^d \toweakstar \lambda \coloneqq \left(\frac{\dd\mu}{\dd\mathcal L^d}\cdot \A^* w^*\right) \, \mathcal L^d \restrict \Omega ~+ ~ f^\infty\left(\;\frarg \;,\frac{\dd \mu}{\dd |\mu^s|}\right) \, |\mu^s|
	\]
	in $\M(\Omega)$, and therefore using that $\lambda_j(\Omega) = \dpr{w^*,\tau}$ for all $j \in \Nbb$, we get that $\lambda(\Omega) = \dpr{w^*,\tau}$. The pointwise identities from (ii) then yield
		\begin{align*}
	\Rcal[w^*] & =  -\int_\Omega f^*(x,\A^*w^*) \dd x +\lambda(\Omega) \\
	&= -\int_\Omega f^*(x,\A^*w^*) \dd x +\dprB{\frac{\dd \mu}{\dd \Lcal^d},\A^*w^*} \\
	& \qquad +\int_\Omega f^\infty\left(x,\frac{\dd \mu}{\dd |\mu^s|}(x)\right) \dd  |\mu^s|(x) \\
	& = 	\cl\Fcal[\mu].
	\end{align*}
	This proves \eqref{eq:final}.
\end{proof}

\begin{remark}[Optimality conditions II]
In the case that there exists a solution $w^*$ of~\eqref{eq:dualproblem} with substantially better regularity than the one originally posed by being admissible to its variational problem, say, for example, $w \in \Crm^k(\Omega;\R^N)$ (where $k$ is the order of $\A$) or $\A^*w^* \in 
	\Lrm_{|\mu^s|}^\infty(\Omega;\R^n)$. Then, it is easy to verify (cf. Remark \ref{rem:better}) that 
	\[
f^\infty\left(x,\frac{\dd \mu^s}{\dd |\mu^s|}(x_0) \right) = \frac{\dd \mu}{\dd |\mu^s|}(x_0) \cdot \A^* w^*(x_0) \quad \text{for $|\mu^s|$-a.e. $x_0 \in \Omega$},\]
	and
	\begin{gather*}\label{pointwise}
	f\left(x,\frac{\dd\mu}{\dd\mathcal L^d}(x)\right) + f^*(x,\A^*u^*(x))  = \frac{\dd\mu}{\dd\mathcal L^d}(x) \cdot \A^*w^*(x) \quad \text{for 
		$\mathcal L^d$-a.e. in $\Omega$},
	\end{gather*}
	are also equivalent to (i) and (ii) in Theorem \ref{thm:optimal}.
\end{remark}

\bibliographystyle{plain}
\bibliography{Bib}

\end{document}